\def\shorttitle{Coefficient Inverse Problems for General Hyperbolic Equations}
\def\shortauthor{J. Yu, Y. Liu and M. Yamamoto}
\newfont{\myfnt}{cmssi10 scaled 1440}
\numberwithin{equation}{section}
\def\ps@nk{\def\@oddhead{\vbox{\hbox to \hsize{\pic \footnotesize \it \shorttitle
\hfill \rm \thepage} \vspace{1mm} \vspace*{-2mm}}}
\def\@evenhead{\vbox{\hbox to \hsize{\pic \footnotesize \rm \thepage \hfill \it \shortauthor}
\vspace{1mm} \vspace*{-2mm}}}
\def\@oddfoot{} \def\@evenfoot{}}
\def\ps@first{\def\@oddhead{\vbox{\hbox to \hsize{\pic \footnotesize
} \break}}
\def\@oddfoot{} \def\@evenfoot{}}
\newtheoremstyle{thmstyle}
  {6pt}
  {6pt}
  {\it}
  {}
  {\bf}
  {}
  {.5em}
  {}
\newtheoremstyle{remstyle}
  {6pt}
  {6pt}
  {\rm}
  {}
  {\bf}
  {}
  {.5em}
  {}
\def\Section#1{\Sec{\large #1} \setcounter{equation}{0} \vskip -6mm \indent}
\def\Sec{\@Startsection{section}{1}{\z@}
                                   {-3.5ex \@plus -1ex \@minus -.2ex}%
                                   {2.3ex \@plus.2ex}%
                                   {\normalfont\large\bfseries\boldmath}}
\def\@Startsection#1#2#3#4#5#6{%
  \if@noskipsec \leavevmode \fi
  \par
  \@tempskipa #4\relax
  \@afterindenttrue
  \ifdim \@tempskipa <\z@
    \@tempskipa -\@tempskipa \@afterindentfalse
  \fi
  \if@nobreak
    \everypar{}%
  \else
    \addpenalty\@secpenalty\addvspace\@tempskipa
  \fi
  \@ifstar
    {\@ssect{#3}{#4}{#5}{#6}}%
    {\@dblarg{\@Sect{#1}{#2}{#3}{#4}{#5}{#6}}}}
\def\@Sect#1#2#3#4#5#6[#7]#8{%
  \ifnum #2>\c@secnumdepth
    \let\@svsec\@empty
  \else
    \refstepcounter{#1}%
    \protected@edef\@svsec{\@seccntformat{#1}\relax}%
  \fi
  \@tempskipa #5\relax
  \ifdim \@tempskipa>\z@
    \begingroup
      #6{%
          \@hangfrom{\hskip #3\relax\@svsec \hskip -2.5mm}%
          \interlinepenalty \@M #8\@@par}
    \endgroup
    \csname #1mark\endcsname{#7}%
    \addcontentsline{toc}{#1}{%
      \ifnum #2>\c@secnumdepth \else
        \protect\numberline{\csname the#1\endcsname}%
      \fi
      #7}%
  \else
    \def\@svsechd{%
      #6{\hskip #3\relax
      \@svsec #8}%
      \csname #1mark\endcsname{#7}%
      \addcontentsline{toc}{#1}{%
        \ifnum #2>\c@secnumdepth \else
          \protect\numberline{\csname the#1\endcsname}%
        \fi
        #7}}%
  \fi
  \@xsect{#5}}
\renewenvironment{abstract}{%
        \small
        \quotation
         \noindent {\bfseries \abstractname } }%
      {\if@twocolumn\else\endquotation\fi}
\def\Subsec{\@StartSubsection{subsection}{2}{\z@}%
                                     {-3.25ex\@plus -1ex \@minus -.2ex}%
                                     {1.5ex \@plus .2ex}%
                                     {\normalfont\normalsize\bfseries\boldmath}}
\def\@StartSubsection#1#2#3#4#5#6{%
  \if@noskipsec \leavevmode \fi
  \par
  \@tempskipa #4\relax
  \@afterindenttrue
  \ifdim \@tempskipa <\z@
    \@tempskipa -\@tempskipa \@afterindentfalse
  \fi
  \if@nobreak
    \everypar{}%
  \else
    \addpenalty\@secpenalty\addvspace\@tempskipa
  \fi
  \@ifstar
    {\@ssect{#3}{#4}{#5}{#6}}%
    {\@dblarg{\@SubSect{#1}{#2}{#3}{#4}{#5}{#6}}}}
\def\@SubSect#1#2#3#4#5#6[#7]#8{%
  \ifnum #2>\c@secnumdepth
    \let\@svsec\@empty
  \else
    \refstepcounter{#1}%
    \protected@edef\@svsec{\@seccntformat{#1}\relax}%
  \fi
  \@tempskipa #5\relax
  \ifdim \@tempskipa>\z@
    \begingroup
      #6{%
          \@hangfrom{\hskip #3\relax\@svsec\hskip -1.5mm}%
          \interlinepenalty \@M #8\@@par}
    \endgroup
    \csname #1mark\endcsname{#7}%
    \addcontentsline{toc}{#1}{%
      \ifnum #2>\c@secnumdepth \else
        \protect\numberline{\csname the#1\endcsname}%
      \fi
      #7}%
  \else
    \def\@svsechd{%
      #6{\hskip #3\relax
      \@svsec #8}%
      \csname #1mark\endcsname{#7}%
      \addcontentsline{toc}{#1}{%
        \ifnum #2>\c@secnumdepth \else
          \protect\numberline{\csname the#1\endcsname}%
        \fi
        #7}}%
  \fi
  \@xsect{#5}}
\def\list#1#2{\ifnum \@listdepth >5\relax \@toodeep \else \global
\advance \@listdepth\@ne \fi \rightmargin \z@ \listparindent\z@
\itemindent\z@ \csname @list\romannumeral\the\@listdepth\endcsname
\def\@itemlabel{#1}\let\makelabel\@mklab \@nmbrlistfalse #2\relax
\@trivlist \parskip 0pt \parindent\listparindent \advance \linewidth
-\rightmargin \advance\linewidth -\leftmargin \advance\@totalleftmargin
\leftmargin \parshape \@ne \@totalleftmargin \linewidth \ignorespaces}
\renewcommand{\@makecaption}[2]{\begin{center}#1. #2\end{center}}
\theoremstyle{thmstyle}
\newtheorem{thm}{\indent Theorem}[section]
\newtheorem{lem}[thm]{\indent Lemma}
\newtheorem{prop}[thm]{\indent Proposition}
\newtheorem{prob}[thm]{\indent Problem}
\theoremstyle{remstyle}
\newtheorem{rem}[thm]{\indent Remark}
\newtheorem{algo}[thm]{\indent Algorithm}
\newtheorem{ex}[thm]{\indent Example}
\newsavebox{\mygraphic}
\def\pic{\begin{picture}(0,0) \put(-210,-1250){\usebox{\mygraphic}} \end{picture}}
\newfont{\HUGEbf}{cmbx10 scaled 3500}
\definecolor{gray}{rgb}{0.9,0.9,0.9}
\def\thebibliography#1{\section*{\bf \large References}
\list{[\arabic{enumi}]} {\settowidth \labelwidth{[#1]} \leftmargin
\labelwidth \advance \leftmargin \labelsep \usecounter{enumi}}
\def\newblock{\hskip .11em plus .33em minus .07em} \footnotesize \sloppy \clubpenalty
4000 \widowpenalty 4000 \sfcode`\.=1000 \relax}
\def\BR{\mathbb R}
\def\cA{\mathcal A}
\def\cB{\mathcal B}
\def\cH{\mathcal H}
\def\cL{\mathcal L}
\def\cP{\mathcal P}
\def\cU{\mathcal U}
\def\rd{\mathrm d}
\def\rdiv{\mathrm{div}}
\def\e{\mathrm e}
\def\err{\mathrm{err}}
\def\rand{\mathrm{rand}}
\def\s{\mathrm s}
\def\supp{\mathrm{supp}}
\def\true{\mathrm{true}}
\def\Ga{\Gamma}
\def\Om{\Omega}
\def\al{\alpha}
\def\be{\beta}
\def\ga{\gamma}
\def\de{\delta}
\def\ep{\epsilon}
\def\ve{\varepsilon}
\def\te{\theta}
\def\ka{\kappa}
\def\la{\lambda}
\def\si{\sigma}
\def\vp{\varphi}
\def\om{\omega}
\def\f{\frac}
\def\nb{\nabla}
\def\ov{\overline}
\def\pa{\partial}
\def\wt{\widetilde}
\def\tri{\triangle}
\def\beqnx{\begin{eqnarray*}} \def\eqnx{\end{eqnarray*}}
\theoremstyle{definition}
\numberwithin{equation}{section}
\title{\Large\bf\boldmath Theoretical Stability in Coefficient Inverse Problems\\
for General Hyperbolic Equations with\\
Numerical Reconstruction}
\author{\large Jie YU$^\dag$\qquad Yikan LIU$^{\ddag,*}$\qquad Masahiro YAMAMOTO$^\ddag$}
\date{}
\begin{document}

\maketitle

\thispagestyle{first}
\renewcommand{\thefootnote}{\fnsymbol{footnote}}

\footnotetext{\hspace*{-5mm} \begin{tabular}{@{}r@{}p{14cm}@{}} &
Manuscript last updated: \today.\\
$^\dag$ & School of Mathematical Sciences, Fudan University, No.\! 220 Handan Road, Shanghai 200433, China.\\
$^\ddag$ & Graduate School of Mathematical Sciences, The University of Tokyo, 3-8-1 Komaba, Meguro-ku, Tokyo 153-8914, Japan.\\
$^*$ & Corresponding author. E-mail: ykliu@ms.u-tokyo.ac.jp
\end{tabular}}

\renewcommand{\thefootnote}{\arabic{footnote}}

\begin{abstract}
In this article, we investigate the determination of the spatial component in the time-dependent second order coefficient of a hyperbolic equation from both theoretical and numerical aspects. By the Carleman estimates for general hyperbolic operators and an auxiliary Carleman estimate, we establish local H\"older stability with both partial boundary and interior measurements under certain geometrical conditions. For numerical reconstruction, we minimize a Tikhonov functional which penalizes the gradient of the unknown function. Based on the resulting variational equation, we design an iteration method which is updated by solving a Poisson equation at each step. One-dimensional prototype examples illustrate the numerical performance of the proposed iteration.

\vskip 4.5mm

\noindent\begin{tabular}{@{}l@{ }p{10cm}} {\bf Keywords } & Hyperbolic equation, Coefficient inverse problem, Carleman estimate, Local H\"older stability, Iteration method
\end{tabular}

\vskip 4.5mm

\noindent{\bf AMS Subject Classifications } 35L20, 35R30, 35A23, 65M32, 65F22

\end{abstract}

\baselineskip 14pt

\setlength{\parindent}{1.5em}

\setcounter{section}{0}

\Section{Introduction}\label{sec-intro}

Let $T>0$ and $\Om\subset\BR^n$ ($n=1,2,3$) be an open bounded domain whose boundary $\pa\Om$ is of $C^3$ class. Let $\nu=(\nu_1,\ldots,\nu_n)$ be the outward unit normal vector to $\pa\Om$, and denote the normal derivative on $\pa\Om$ by $\pa_\nu w:=\nb w\cdot\nu$. Introduce the hyperbolic operator
\begin{equation}\label{eq-def-Hp}
\cH_pw:=\pa_t^2w-\rdiv(p\,a\nb w)-b\cdot\nb w-c\,w\quad\mbox{in }Q:=\Om\times(-T,T),
\end{equation}
where $a=(a_{ij}(x,t))_{1\le i,j\le n}$ is a symmetric matrix, $b=(b_i(x,t))_{1\le i\le n}$ is a vector, and $p=p(x),c=c(x,t)$ are scalar functions. For the non-degeneracy, we assume that $p$ is strictly positive in $\ov\Om\,$, and $a$ is strictly positive-definite in $\ov Q\,$. For later use, we denote the normal derivative associated with the second order coefficient $p\,a$ as
\begin{equation}\label{eq-def-B}
\cB w:=p\,a\nb w\cdot\nu\quad\mbox{on }\pa\Om\times(-T,T).
\end{equation}

In this paper, we consider the initial value problem for a hyperbolic equation
\begin{equation}\label{eq-gov-u}
\begin{cases}
\cH_pu=F & \mbox{in }Q,\\
u=u_0,\ \pa_tu=u_1 & \mbox{in }\Om\times\{0\}.
\end{cases}
\end{equation}
From the physical point of view, \eqref{eq-gov-u} models the general acoustic wave in a highly anisotropic medium depending on both space and time. In a recent paper \cite{LY14}, it turns out that \eqref{eq-gov-u} also describes the one-dimensional time cone model for phase transformation. Note that for the well-posedness of the forward problem, we should attach \eqref{eq-gov-u} with a boundary condition. However, for the inverse problem proposed later, it suffices to access e.g.\! only the partial boundary value, which is regarded as a part of observation data. Therefore, for the moment we do not include any boundary condition in \eqref{eq-gov-u}. To emphasize the dependency, throughout this paper we will write any solution satisfying \eqref{eq-gov-u} with the coefficient $p$ as $u(p)$. Detailed assumptions on the coefficients $p,a,b,c,F,u_0,u_1$ involved in \eqref{eq-gov-u} will be given later in Section \ref{sec-main}.

This paper is mainly concerned with the following coefficient inverse problem on the determination of the spatial component $p$ in the principal part of the hyperbolic operator \eqref{eq-def-Hp}.

\begin{prob}\label{prob-cip}
Let $\Ga\subset\pa\Om$ be a subboundary, $\om\subset\Om$ be a subdomain, and $u(p)$ satisfy \eqref{eq-gov-u}. Determine the coefficient $p$ by

{\bf Type (I)}\ \ the partial boundary observation of $u(p)$ and $\cB u(p)$ on $\Ga\times(-T,T)$, or

{\bf Type (II)}\ \ the partial interior observation of $u(p)$ in $\om\times(-T,T)$.
\end{prob}

In the formulation of Problem \ref{prob-cip}, the second order coefficient $p(x)a(x,t)$ takes the form of incompletely separated variables, where the unknown spatial component $p(x)$ contributes an important part in the wave propagation speed. In view of the acoustic equation, Problem \ref{prob-cip} stands for the identification of the bulk modulus, which is of practical significance. Hence, we will not only investigate the theoretical aspect of Problem \ref{prob-cip} due to our interest in mathematics, but also consider the reconstruction method to solve $p(x)$ numerically.

In retrospect, researches on coefficient inverse problems for hyperbolic equations started soon after the pioneering work of Bukhgeim and Klibanov \cite{BK81} which discovered the potential of Carleman estimates. We refer e.g.\! to \cite{K87,K92,I93} for some early results mainly on the uniqueness. Around 2000s, \cite{Y99,IY01a,IY01b} established the global Lipschitz stability for determining the zeroth order coefficient $c(x)$ in $(\pa_t^2-\tri-c)u=0$ by the same types of data in Problem \ref{prob-cip}. For Problem \ref{prob-cip} with $a=I_{n\times n}$ and $b=c\equiv0$ in \eqref{eq-def-Hp}, Imanuvilov and Yamamoto \cite{IY03} employed an $H^{-1}$ Carleman estimate to obtain the global H\"older stability by partial boundary observation. Later, this result was improved to Lipschitz in Bellassoued and Yamamoto \cite{BY08} with time-independent coefficients, i.e., $a=(a_{ij}(x))$ in \eqref{eq-def-Hp}. For other references on this direction, see also \cite{BY06,I06}. It reveals that all of the above literature more or less imposes some geometrical conditions because of the finite wave propagation speed. Meanwhile, the above results mostly rely on a linearization approach, which reduces the problem to a corresponding inverse source problem. In addition, we emphasize the difference between e.g.\! $\rdiv(p\nb u)$ and $p\,\tri u$, where the former is more physical and the latter is technically easier (see \cite{B04}). However, there seems no publication treating Problem \ref{prob-cip} in the case of time-dependent principal parts due to the essential difficulty. Recently, Jiang, Liu and Yamamoto \cite{JLY17a} established the second order Carleman estimate for \eqref{eq-def-Hp} and proved the local H\"older stability for a related inverse source problem. Motivated by \cite{JLY17a}, we first attempt to generalize the result in \cite{IY03} with a more general hyperbolic operator $\cH_p$ with time-dependent coefficients, which is the first focus of this article.

Simultaneously, we have witnessed the recent applications of the iterative thresholding algorithm to inverse problems for partial differential equations. For the abstract formulation and convergence analysis of the algorithm, we refer to \cite{DDM04,DTV07,RT06}. Attracted by its efficiency and robustness in many image processing problems, \cite{JFZ14} first utilized the iterative thresholding algorithm to solve inverse problems for elliptic and parabolic equations. In \cite{LJY15,JLY17a,JLY17b}, similar iteration methods were implemented to treat inverse source problems for hyperbolic-type equations with different types of observation data. Following the same line, we also attempt to develop the same class of iteration method to solve Problem \ref{prob-cip} numerically, which is the second focus of this article. However, we should realize the underlying ill-posedness as well as the nonlinearity of Problem \ref{prob-cip}, which differs considerably from inverse source problems. For the numerical reconstruction of a time-dependent principal coefficient, we refer to \cite{LXY12}.

Based on the newly established Carleman estimate in \cite{JLY17a}, we first prove the local H\"older stability of Problem \ref{prob-cip} for both types of observation data (see Theorem \ref{thm-stab}). Due to the lack of an $H^{-1}$ Carleman estimate for $\cH_p$ as that in \cite{IY03}, we have to argue in an alternative way to evaluate the $H^1$-norm of the difference in unknown functions, which results from the divergence form $\rdiv(p\,a\nb u)$ in \eqref{eq-gov-u}. For the numerical reconstruction, we reformulate Problem \ref{prob-cip} as a minimization problem with the Tikhonov regularization penalizing the $L^2$-norm of $\nb p$. Deriving the variational equation of the minimizer, we arrive at a novel iteration method which needs to solve a Poisson equation at each step.

The rest of this article is organized as follows. Preparing the necessary ingredients including the key Carleman estimates, in Section \ref{sec-main} we state the main result on the theoretical stability of Problem \ref{prob-cip}. Then we give the proof of the main result in Section \ref{sec-proof}. Next, Section \ref{sec-recon} is devoted to the derivation of an iteration method for Problem \ref{prob-cip}, followed by Section \ref{sec-numer} illustrating several one-dimensional numerical examples. Finally, we provide some concluding remarks in Section \ref{sec-remark}.

\Section{Preliminaries and Main Results}\label{sec-main}

In this section, we start from the general settings and assumptions concerning the governing equation \eqref{eq-gov-u}, and prepare the key Carleman estimates for the hyperbolic operator \eqref{eq-def-Hp}. Then we state the main result of this paper, which gives the stability estimate for Problem \ref{prob-cip}.

Throughout this paper, we write $\pa_i=\f\pa{\pa x_i}$ and $\pa_i\pa_j=\f{\pa^2}{\pa x_i\pa x_j}$ ($1\le i,j\le n$) for the partial derivatives in space. We recall the definition $Q:=\Om\times(-T,T)$ and the notations $W^{k,\infty}(-T,T;W^{\ell,\infty}(\Om))$, $H^k(\Om)$, $H^{k-1/2}(\pa\Om)$, etc.\! ($k,\ell=0,1,\ldots$) for the usual Sobolev spaces (see Adams \cite{A75}). For the various coefficients appearing in \eqref{eq-gov-u}, we basically make the following assumptions:
\begin{equation}\label{eq-asp-0}
\begin{aligned}
& a\in(W^{3,\infty}(Q))^{n\times n},\quad\exists\,\ka_0>0\mbox{ such that }a\,\xi\cdot\xi\ge\ka_0|\xi|^2\mbox{ in }\ov Q\,,\ \forall\,\xi\in\BR^n,\\
& b\in (W^{2,\infty}(-T,T;L^\infty(\Om)))^n,\quad c\in W^{2,\infty}(-T,T;L^\infty(\Om)),\quad p\in W^{2,\infty}(\Om),\\
& F\in H^2(-T,T;L^2(\Om)),\quad u_0\in W^{3,\infty}(\Om),\quad u_1\in H^2(\Om).
\end{aligned}
\end{equation}
With some suitably given boundary condition and the compatibility condition, it is well known that the initial-boundary value problem governed by \eqref{eq-gov-u} admits a unique solution $u(p)$ which depends continuously on the involved coefficients (see e.g.\! \cite{LM72,JLY17a}). In order to prove the theoretical stability for Problem \ref{prob-cip}, we have to assume
\begin{equation}\label{eq-asp-reg}
u(p)\in\bigcap_{k=0}^2H^{4-k}(-T,T;H^k(\Om)),
\end{equation}
which satisfies the a priori estimate
\begin{equation}\label{eq-asp-bnd}
\sum_{k=0}^2\|u(p)\|_{H^{3-k}(-T,T;H^k(\Om))}\le M_0
\end{equation}
with a constant $M_0>0$. As was mentioned in \cite{JLY17a}, the smoothness of coefficients assumed in \eqref{eq-asp-0} may not guarantee the regularity in \eqref{eq-asp-reg}. Instead, we only understand \eqref{eq-asp-0}--\eqref{eq-asp-bnd} as the minimum necessary assumptions for the stability.

If the initial value problem \eqref{eq-gov-u} is only formulated in $\Om\times(0,T)$, we should additionally impose
\[\pa_ta=\pa_tb=\pa_tc=\pa_tF=u_1=0\quad\mbox{in }\Om\times\{0\}.\]
Then we can perform even reflections of $a,b,c,F$ and the solution $u(p)$ with respect to $t$, so that they preserve the same regularity in $Q$ as assumed in \eqref{eq-asp-0} and \eqref{eq-asp-reg}. To circumvent the above technical assumption, we simply consider \eqref{eq-gov-u} in $Q=\Om\times(-T,T)$ without loss of generality.

Next, we recall the Carleman estimates for the hyperbolic operator $\cH_p$ defined in \eqref{eq-def-Hp}, which play an essential role in both the statement and the proof of the main result. Similarly to \cite{JLY17a}, we pick $d\in C^2(\ov\Om)$ such that $d>0$ in $\ov\Om$\,, and set
\begin{equation}\label{eq-def-psi}
\psi(x,t):=d(x)-\be\,t^2,\quad0<\be<1.
\end{equation}
By $d(x)$ and $\psi(x,t)$, we introduce the level sets with a parameter $\de\ge0$ as
\begin{equation}\label{eq-def-level}
\Om(\de):=\{x\in\Om;\,d(x)>\de\},\quad Q(\de):=\{(x,t)\in Q;\,\psi(x,t)>\de\}.
\end{equation}
With a sufficiently large parameter $\la>0$, we define the weight function as
\begin{equation}\label{eq-def-weight}
\vp(x,t):=\e^{\la\psi(x,t)}.
\end{equation}
We collect the Carleman estimates concerning \eqref{eq-def-Hp} in the following lemma.

\begin{lem}[see \cite{JLY17a}]\label{lem-CE}
Let the coefficients $a,b,c$ and $p$ satisfy \eqref{eq-asp-0}. Suppose that the hyperbolic operator $\cH_p$ in \eqref{eq-def-Hp} admits the following Carleman estimate with the weight function $\vp$ in \eqref{eq-def-weight}: For any $\de\ge0$, there exists constants $s_0>0$ and $C_0>0$ such that
\begin{equation}\label{eq-CE-1}
\int_{Q(\de)}s\left(|\pa_tw|^2+|\nb w|^2+s^2|w|^2\right)\e^{2s\vp}\,\rd x\rd t\le C_0\int_{Q(\de)}|\cH_pw|^2\e^{2s\vp}\,\rd x\rd t
\end{equation}
holds for all $s\ge s_0$ and $w\in H^2(Q)$ satisfying $\supp\,w\subset\ov{Q(\de)}\,$, where $Q(\de)\ (\de\ge0)$ is defined in \eqref{eq-def-level}. Then for any $\de\ge0$, there exist constants $s_1>0$ and $C_1>0$ such that
\begin{align}
\int_{Q(\de)}\sum_{i,j=1}^n|\pa_i\pa_jw|^2\e^{2s\vp}\,\rd x\rd t & \le C_1\int_{Q(\de)}\left(\f1s|\pa_t(\cH_pw)|^2+s|\cH_pw|^2\right)\e^{2s\vp}\,\rd x\rd t,\label{eq-CE-2a}\\
\int_{Q(\de)}|\pa_t^2w|^2\e^{2s\vp}\,\rd x\rd t & \le C_1\int_{Q(\de)}\left(\f1s|\pa_t(\cH_pw)|^2+|\cH_pw|^2\right)\e^{2s\vp}\,\rd x\rd t\label{eq-CE-2b}
\end{align}
holds for all $s\ge s_1$ and $w\in H^2(Q)$ satisfying
\[\pa_tw\in H^2(Q),\quad\supp\,w\subset\ov{Q(\de)}\,,\quad\pa_t^kw(\,\cdot\,,\pm T)=0,\ k=0,1,2.\]
\end{lem}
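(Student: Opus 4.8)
The plan is to deduce the higher-order estimates \eqref{eq-CE-2a}--\eqref{eq-CE-2b} from the first order Carleman estimate \eqref{eq-CE-1} by differentiating the equation and applying \eqref{eq-CE-1} to the derivatives of $w$. Concretely, I would first establish \eqref{eq-CE-2b} by setting $v:=\pa_t w$. Since $\cH_p$ has time-dependent coefficients, $\cH_p v = \pa_t(\cH_p w) + (\text{lower order commutator terms in }w)$, where the commutator involves $\pa_t a,\pa_t b,\pa_t c$ acting on at most second order spatial/mixed derivatives of $w$ — all of which are controlled by the assumptions \eqref{eq-asp-0}. The support condition $\supp\,w\subset\ov{Q(\de)}$ is inherited by $v=\pa_t w$, and the vanishing traces $\pa_t^k w(\cdot,\pm T)=0$ for $k=0,1,2$ guarantee $v\in H^2(Q)$ with $\supp\,v\subset\ov{Q(\de)}$, so \eqref{eq-CE-1} applies to $v$. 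This yields $\int_{Q(\de)} s(|\pa_t^2 w|^2+\cdots)\e^{2s\vp}\le C\int_{Q(\de)}|\cH_p v|^2\e^{2s\vp}$; bounding $|\cH_p v|^2\lesssim |\pa_t(\cH_p w)|^2 + (\text{commutator})^2$ and absorbing the commutator terms (which are lower order in $s$) into the left-hand side of \eqref{eq-CE-1} applied to $w$ itself gives \eqref{eq-CE-2b} after dividing by $s$ and readjusting constants.

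For \eqref{eq-CE-2a}, the idea is similar but one differentiates in space: apply \eqref{eq-CE-1} to $w_k:=\pa_k w$ for each $k=1,\dots,n$. Here the subtlety is that $\pa_k w$ need not have compact support in $\ov{Q(\de)}$ since differentiation can enlarge the support only trivially — but $\supp\,\pa_k w\subset\supp\,w\subset\ov{Q(\de)}$ still holds. One commutes $\pa_k$ past $\cH_p$: $\cH_p(\pa_k w)=\pa_k(\cH_p w)+(\text{terms with }\pa_k a,\pa_k b,\pa_k c\text{ and }\pa_k p\text{ acting on }\le\text{2nd order derivatives of }w)$. The genuinely second order spatial term $\nb^2 w$ appearing in this commutator is the one we are trying to estimate, so it must be absorbed: after applying \eqref{eq-CE-1} to each $\pa_k w$ and summing over $k$, the left side controls $\sum s(|\pa_t\pa_k w|^2+|\nb\pa_k w|^2+s^2|\pa_k w|^2)\e^{2s\vp}$, in particular $\sum_{i,j}s|\pa_i\pa_j w|^2\e^{2s\vp}$; the right side has $\sum_k|\pa_k(\cH_p w)|^2\e^{2s\vp}\le|\nb(\cH_p w)|^2\e^{2s\vp}$ plus commutator terms bounded by $C\sum_{i,j}|\pa_i\pa_j w|^2\e^{2s\vp}$ (no extra $s$ power). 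For $s$ large the commutator is absorbed into the left side, and one is left with $\int_{Q(\de)}\sum_{i,j}|\pa_i\pa_j w|^2\e^{2s\vp}\le C\int_{Q(\de)}(\f1s|\nb(\cH_p w)|^2+\cdots)$. Finally one must replace the spatial gradient $\nb(\cH_p w)$ by the time derivative $\pa_t(\cH_p w)$ as in the claimed \eqref{eq-CE-2a}; this is done by using the equation to express the pure spatial second order part of $\cH_p w$ — morally $\rdiv(p\,a\nb w)=\pa_t^2 w-\cH_p w-b\cdot\nb w-cw$ — trading spatial derivatives of $\cH_p w$ for $\pa_t^2$-type and lower order terms, then invoking \eqref{eq-CE-2b} already proven to close the loop, absorbing everything for $s$ large.

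The main obstacle is the careful bookkeeping of the commutator terms and, above all, the absorption step in \eqref{eq-CE-2a}: one must verify that every second order derivative of $w$ produced by commuting a differentiation past $\cH_p$ comes without a compensating power of $s$, so that it is genuinely lower order relative to the $s\sum_{i,j}|\pa_i\pa_j w|^2$ term dominated on the left, and hence absorbable once $s\ge s_1$ for a suitably enlarged threshold. The time-dependence of $a,b,c$ is precisely what makes these commutators nonzero, so the regularity $a\in(W^{3,\infty}(Q))^{n\times n}$, $b,c\in W^{2,\infty}(-T,T;L^\infty(\Om))$ in \eqref{eq-asp-0} must be shown to be exactly enough: differentiating $\cH_p$ once in $t$ costs one $t$-derivative of $a,b,c$, and in the \eqref{eq-CE-2a} argument one also needs a spatial derivative of $p\,a$, which is why $p\in W^{2,\infty}(\Om)$ and three spatial derivatives of $a$ are required. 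A secondary technical point is justifying that all manipulations are legitimate for $w\in H^2(Q)$ with $\pa_t w\in H^2(Q)$ by a standard density argument, so that the formal differentiations are rigorous.
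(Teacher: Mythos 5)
First, a point of order: the paper itself does not prove this lemma --- it is quoted from \cite{JLY17a} (Lemma 3.1 there), as the remark immediately following it says. So there is no in-paper proof to match; judged on its own terms, your proposed derivation contains two genuine gaps that the argument of \cite{JLY17a} is specifically structured to avoid.

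For \eqref{eq-CE-2a}, commuting $\pa_k$ past $\cH_p$ produces the terms $(\pa_kb)\cdot\nb w$ and $(\pa_kc)\,w$. Under \eqref{eq-asp-0} the coefficients $b$ and $c$ lie only in $W^{2,\infty}(-T,T;L^\infty(\Om))$ --- their extra regularity is in $t$, not in $x$ --- so $\pa_kb$ and $\pa_kc$ need not exist, and your commutator is not defined. In addition, applying \eqref{eq-CE-1} to $\pa_kw$ requires $\pa_kw\in H^2(Q)$, which is more regularity than the lemma assumes for $w$, and the final step of converting $\nb(\cH_pw)$ on the right-hand side into $\pa_t(\cH_pw)$ is only gestured at. The intended route is different: write $-\rdiv(p\,a\nb w)=\cH_pw-\pa_t^2w+b\cdot\nb w+c\,w$ and apply a weighted elliptic $H^2$ a priori estimate to $\e^{s\vp}w$ for a.e.\ $t$, then control the resulting right-hand side (including the $s$- and $s^2$-weighted lower-order terms coming from derivatives of $\e^{s\vp}$) by \eqref{eq-CE-1} and by the $\pa_t^2w$ estimate. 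This needs no spatial derivative of $b,c$ and no $H^3$ regularity of $w$, which is precisely why \eqref{eq-asp-0} is stated as it is. Separately, in your derivation of \eqref{eq-CE-2b} the commutator $[\pa_t,\cH_p]w$ contains $\rdiv(p(\pa_ta)\nb w)$, whose leading part $\sum_{i,j}p(\pa_ta_{ij})\pa_i\pa_jw$ is a genuinely second-order spatial term with no compensating power of $s$. You claim it can be absorbed ``into the left-hand side of \eqref{eq-CE-1} applied to $w$ itself'', but that left-hand side controls only $w$, $\nb w$ and $\pa_tw$, not $\pa_i\pa_jw$. Consequently \eqref{eq-CE-2b} cannot be established before \eqref{eq-CE-2a}: the two inequalities must be derived as a coupled system, substituting the elliptic bound for $\sum_{i,j}|\pa_i\pa_jw|^2$ into the inequality for $\pa_t^2w$ and absorbing the resulting $\int_{Q(\de)}|\pa_t^2w|^2\e^{2s\vp}\,\rd x\rd t$ term for $s$ large. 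As written, your sketch proves \eqref{eq-CE-2b} first and then invokes it for \eqref{eq-CE-2a}, which makes the argument circular.
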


\begin{rem}
In general, the first order Carleman estimate \eqref{eq-CE-1} does not hold automatically. Indeed, some artificial conditions on $d$ and the second order coefficient $p\,a$ are necessary to validate \eqref{eq-CE-1}. For example, in the isotropic case
\[p(x)a_{ij}(x,t)=\begin{cases}
a_0(x,t), & i=j,\\
0, & i\ne j,
\end{cases}\]
under certain geometrical assumptions and the choice
\[d(x)=x_1^2+\cdots+x_{n-1}^2+(x_n-1)^2,\]
a sufficient condition for \eqref{eq-CE-1} can be (see Isakov \cite[Theorem 3.4.3]{I06})
\[a_0>\sqrt\be\,,\quad\be\left(2-\f{t\,\pa_ta_0}{a_0}+\f{2|t\,\nb a_0|}{\sqrt{a_0}}\right)<2a_0-\nb a_0\cdot\nb d+\pa_na_0\quad\mbox{in }\ov Q\,,\]
where $\be\in(0,1)$ is the parameter in \eqref{eq-def-psi}. On the other hand, the second order Carleman estimates \eqref{eq-CE-2a}--\eqref{eq-CE-2b} was established in \cite[Lemma 3.1]{JLY17a} on the basis of \eqref{eq-CE-1}.
\end{rem}

Now we turn our attention to Problem \ref{prob-cip}. For the subboundary $\Ga\subset\pa\Om$, the subdomain $\om\subset\Om$ and $T>0$ where the observation is taken, we make the following assumption. Given a constant $\de_0\ge0$ and a function $d\in C^2(\ov\Om)$ such that $d>0$ in $\ov\Om\,$, we assume
\begin{equation}\label{eq-asp-obs}
\ov{\Om(\de_0)}\subset\Om\cup\Ga,\quad\pa\om\supset\Ga,\quad T^2>\|d\|_{C(\ov\Om)},
\end{equation}
where $\Om(\de_0)$ is the level set defined in \eqref{eq-def-level}. Although the above assumption looks technical, it generalizes the similar assumption in treating the same inverse problems for simpler wave equations (see \cite{IY01b,LJY15}), where the function $d$ was simply taken as $d(x)=|x-x_0|^2$ with $x_0\not\in\ov\Om\,$.

Finally, for the unknown function $p(x)$ to be determined, we define an admissible set $\cU$ in the following way. For given constants $M_1>0,\ka_1>0$ and given functions $h_0\in W^{2,\infty}(\Ga)$ and $h_1\in W^{1,\infty}(\Ga)$ where $\Ga$ satisfies \eqref{eq-asp-obs}, we restrict $p$ in
\begin{equation}\label{eq-def-adm}
\cU:=\{p\in W^{2,\infty}(\Om);\,\|p\|_{H^1(\Om)}\le M_1,\ p\ge\ka_1\mbox{ in }\ov\Om\,,\ p=h_0,\ \pa_\nu p=h_1\mbox{ on }\Ga\}.
\end{equation}

Collecting all necessary ingredients, now we are in a position to state the main theoretical result, which answers the stability issue of Problem \ref{prob-cip}.

\begin{thm}\label{thm-stab}
Let $\Ga\subset\pa\Om$, $\om\subset\Om$ and $T>0$ satisfy \eqref{eq-asp-obs} with arbitrarily fixed $\de_0\ge0$ and $d\in C^2(\ov\Om)$ such that $d>0$ in $\ov\Om\,$. Let $u(p)$ and $u(q)$ satisfy \eqref{eq-gov-u} with coefficients $p$ and $q$ respectively, where $a,b,c$ satisfy \eqref{eq-asp-0} and $p,q\in\cU$ defined in \eqref{eq-def-adm}. Suppose that $u(p),u(q)$ satisfy \eqref{eq-asp-reg}--\eqref{eq-asp-bnd}, and the hyperbolic operator $\cH_p$ admits the Carleman estimate \eqref{eq-CE-1} with the weight function \eqref{eq-def-weight}. Further assume that there exists a constant $\ka_2>0$ such that
\begin{equation}\label{eq-asp-1}
|a(\,\cdot\,,0)\nb u_0\cdot\nb d|\ge\ka_2\quad\mbox{a.e.\! in }\Om.
\end{equation}
Then for any $\de>\de_0$, there exist constants $C>0$ and $\te\in(0,1)$ depending on $a,b,c,d,u_0,\de,T,\Ga$ or $\om$ such that
\begin{equation}\label{eq-stab-a}
\|p-q\|_{H^1(\Om(\de))}\le CD+CM^{1-\te}D^\te,
\end{equation}
where $\Om(\de)$ is defined in \eqref{eq-def-level}, $M:=\max\{M_0,M_1\}$ with the a priori bounds $M_0,M_1$ in \eqref{eq-asp-bnd} and \eqref{eq-def-adm} respectively, and
\begin{equation}\label{eq-def-obs}
D:=\left\{\!\begin{alignedat}{2}
& \sum_{k=0}^2\|u(p)-u(q)\|_{H^{4-k}(-T,T;H^{k-1/2}(\Ga))}\\
& \quad+\|\cB u(p)-\cB u(q)\|_{H^2(-T,T;H^{1/2}(\Ga))}, & \quad & \mbox{Type }{\rm(I)},\\
& \sum_{k=0}^2\|u(p)-u(q)\|_{H^{4-k}(-T,T;H^k(\om))}, & \quad & \mbox{Type }{\rm(II)}.
\end{alignedat}\right.
\end{equation}
\end{thm}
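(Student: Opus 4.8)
The strategy follows the linearization approach standard for coefficient inverse problems via Carleman estimates, but with the crucial twist that no $H^{-1}$ Carleman estimate is available for $\cH_p$. Set $y:=u(p)-u(q)$ and $f:=p-q$. Subtracting the two governing equations \eqref{eq-gov-u}, one finds that $y$ solves a hyperbolic equation of the form $\cH_p\,y=\rdiv(f\,a\nb u(q))$ in $Q$ with vanishing initial data, where the right-hand side is linear in $f$. The first step is to differentiate this identity twice in $t$ and introduce $z:=\pa_t^2 y$ (or rather suitable combinations $\pa_t^k y$, $k=0,1,2$), so that $z$ satisfies an equation whose source, after evaluating at $t=0$, is controlled from below by $|\rdiv(f\,a(\,\cdot\,,0)\nb u_0)|$; the non-degeneracy hypothesis \eqref{eq-asp-1}, namely $|a(\,\cdot\,,0)\nb u_0\cdot\nb d|\ge\ka_2$, is exactly what is needed to make the weighted energy of $z$ at $t=0$ dominate a weighted norm of $\nb f\cdot\nb d$ (and hence, together with the boundary conditions $f=\pa_\nu f=0$ on $\Ga$ built into $\cU$, of $\nb f$ itself on $\Om(\de)$). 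The second step is to cut off: multiply $y$ by a function $\chi(x,t)$ supported in $\ov{Q(\de_0)}$ and equal to $1$ on $\ov{Q(\de)}$ for $\de>\de_0$, so that $w:=\chi y$ has the support and vanishing-at-$t=\pm T$ properties required in Lemma \ref{lem-CE}; the commutator terms $[\cH_p,\chi]y$ are supported in the region where $\chi$ transitions, where the weight $\e^{2s\vp}\le\e^{2s\e^{\la\de_0}}$ is comparatively small, so they are absorbed as $C\e^{Cs}M$ in the final bound. The use of assumption \eqref{eq-asp-obs}, in particular $T^2>\|d\|_{C(\ov\Om)}$, guarantees that $Q(\de)$ is nonempty and that the time cylinder is long enough for the level sets to behave properly.

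The heart of the argument is the energy estimate at $t=0$ combined with the second-order Carleman estimate \eqref{eq-CE-2a}--\eqref{eq-CE-2b}. Apply \eqref{eq-CE-2a}--\eqref{eq-CE-2b} to $w=\chi y$ (and to $\pa_t w$, since we need up to $\pa_t^2$), which bounds $\int_{Q(\de)}\sum_{i,j}|\pa_i\pa_j w|^2\e^{2s\vp}$ and $\int_{Q(\de)}|\pa_t^2 w|^2\e^{2s\vp}$ by $\int_{Q(\de)}(s^{-1}|\pa_t\cH_pw|^2+s|\cH_pw|^2)\e^{2s\vp}$. Since $\cH_p w = \rdiv(f\,a\nb u(q)) + [\cH_p,\chi]y$, and $\rdiv(f\,a\nb u(q))$ involves at most $\nb f$ times $u(q)$-derivatives (no second derivatives of $f$ after the divergence is expanded — one term is $\nb f\cdot(a\nb u(q))$ and one is $f\,\rdiv(a\nb u(q))$), the right-hand side is estimated by $C\int_{Q(\de)}(|\nb f|^2+|f|^2)\e^{2s\vp}M^2\,\rd x\rd t$ plus the small commutator contribution plus the data term $CDe^{Cs}$ coming from the boundary/interior observation (entering through the standard energy-estimate-plus-trace bookkeeping that converts the cutoff boundary terms into $D$). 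On the other hand, writing $\int_{Q(\de)}|\nb f|^2\e^{2s\vp}\,\rd x\rd t = \int_{\Om(\de)}|\nb f|^2\left(\int_{(t:\psi>\de)}\e^{2s\e^{\la\psi}}\rd t\right)\rd x$ and using the fundamental theorem of calculus $f(x)$-factors can be related to $\pa_t^2 y|_{t=0}$-factors via the equation at $t=0$: specifically $\rdiv(f\,a(\,\cdot\,,0)\nb u_0) = \cH_p y|_{t=0}$ up to lower-order terms, and $\cH_p y|_{t=0}$ is controlled by $\pa_t^2 y(\,\cdot\,,0)$ plus $\|y\|_{H^1}$-type quantities; integrating $|z(x,0)|^2 = -2\int_0^{T}z\,\pa_t z\,\rd t + |z(x,T)|^2$ and using $z(\,\cdot\,,\pm T)=0$ lets us bound the $t=0$ weighted quantity by the full space-time weighted integrals already controlled by the Carleman estimate. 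Choosing $s$ large enough absorbs the $C M^2\int|\nb f|^2\e^{2s\vp}$ term on the left (this is the classical "large parameter absorbs the leading error" trick), leaving
\[
\int_{\Om(\de)}|\nb f|^2\,\rd x \le C\e^{Cs}D^2 + C\e^{-cs}M^2,
\]
after the weight is bounded below on $\Om(\de)$; optimizing in $s$ yields the Hölder form $CD + CM^{1-\te}D^\te$ in \eqref{eq-stab-a}. The Poincaré inequality with the homogeneous trace $f|_\Ga=0$ (valid since $\ov{\Om(\de_0)}\subset\Om\cup\Ga$) upgrades $\|\nb f\|_{L^2(\Om(\de))}$ to $\|f\|_{H^1(\Om(\de))}$.

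The two observation types differ only in how the cutoff/energy bookkeeping produces $D$: for Type (I) one integrates by parts in space over $\Om$, and the boundary terms on $\pa\Om$ reduce to $\Ga$ because $\chi$ and its derivatives vanish outside $\ov{\Om(\de_0)}$, whose closure meets $\pa\Om$ only within $\Ga$; the resulting traces are exactly $u(p)-u(q)$ and $\cB u(p)-\cB u(q)$ on $\Ga\times(-T,T)$, measured in the Sobolev norms listed in \eqref{eq-def-obs}. For Type (II), since $\pa\om\supset\Ga$ one can instead localize inside $\om$ and apply a Carleman estimate on $\om\times(-T,T)$, converting interior observation into the same kind of bound — this is where the inclusion $\pa\om\supset\Ga$ in \eqref{eq-asp-obs} is used. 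I expect the main obstacle to be precisely the point already flagged by the authors: without an $H^{-1}$ Carleman estimate, controlling $\|\nb f\|_{L^2(\Om(\de))}$ rather than merely $\|f\|_{L^2}$ requires one extra $t$-derivative of the equation and a careful argument showing that the divergence-form source $\rdiv(f\,a\nb u(q))$, which genuinely contains $\nb f$, can still be absorbed — this forces the use of the second-order Carleman estimate \eqref{eq-CE-2a} (bounding all second spatial derivatives of $w$) to close the loop, and the bookkeeping of commutators and trace terms through the cutoff is where the technical weight of the proof lies.
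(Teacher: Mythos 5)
Your outline reproduces the skeleton of the paper's argument (linearization, two $t$-derivatives, cutoff supported in $\ov{Q(\de_0)}$, the hyperbolic Carleman estimates, an energy evaluation at $t=0$, and the Bukhgeim--Klibanov absorption of the space--time weighted $f$-integral against the $t=0$ weight, followed by optimization in $s$). But it misses the central new ingredient, and the step where you claim to recover $\|\nb f\|$ does not work as written. At $t=0$ one has $\pa_t^2v(\,\cdot\,,0)=\cL_0f:=\rdiv(f\,a(\,\cdot\,,0)\nb u_0)=a(\,\cdot\,,0)\nb u_0\cdot\nb f+\rdiv(a(\,\cdot\,,0)\nb u_0)\,f$, i.e.\ a \emph{single} directional derivative of $f$ plus a zeroth-order term. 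Controlling this quantity in a weighted $L^2$ norm does not control $|\nb f|$: your assertion that \eqref{eq-asp-1} makes the $t=0$ energy ``dominate a weighted norm of $\nb f\cdot\nb d$ and hence of $\nb f$'' conflates the direction $a\nb u_0$ (which is what actually appears) with $\nb d$, and in any case one directional derivative never dominates the full gradient pointwise. What the paper does instead is (i) prove a separate Carleman estimate for the first-order operator $\cL_0$ with weight $\e^{s\e^{\la d}}$ (Lemma \ref{lem-CE-L0}), valid precisely because \eqref{eq-asp-1} says $a\nb u_0$ is transversal to the level sets of $d$, which bounds $\int(|g|^2+|\nb g|^2)\e^{2s\vp_0}$ by $\int|\nb(\e^{s\vp_0}\cL_0g)|^2$ for $g\in H_0^2(\Om)$; and (ii) to feed this lemma, control the \emph{gradient} $\nb\pa_t^2v(\,\cdot\,,0)$, not just $\pa_t^2v(\,\cdot\,,0)$. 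Your identity $|z(x,0)|^2=-2\int_0^Tz\,\pa_tz\,\rd t$ only yields the latter; the paper needs the dedicated energy identity $I_0=2\int_{-T}^0\!\int_\Om(\pa_t\wt z\,)(\cP\wt z\,)$, integrated by parts to produce $\int_\Om p\,(a\nb\wt z\cdot\nb\wt z\,)(\,\cdot\,,0)$, with $\cP$ the principal part of $\cH_p$. Without both pieces your chain terminates at a bound on $\cL_0 f$ and never reaches $\|f\|_{H^1(\Om(\de))}$.

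A secondary gap concerns how $D$ enters. The Carleman estimates of Lemma \ref{lem-CE} are stated for $w$ with $\supp w\subset\ov{Q(\de)}$ and carry no boundary terms, while $\ov{\Om(\de_0)}$ is allowed to touch $\pa\Om$ along $\Ga$, so the cutoff alone does not make $\chi y$ admissible in Type (I). The paper handles this by subtracting a Sobolev extension $\wt v$ of the observed data (so that $v-\wt v$ has zero Cauchy data on $\Ga$, resp.\ vanishes in $\om$), with $\wt v$ bounded by $M$ in the lower norms and by $D$ in the higher norms; your ``trace bookkeeping after integration by parts'' would require a Carleman estimate with boundary terms on $\Ga$ that is not available here, and your description of Type (II) (applying a Carleman estimate on $\om\times(-T,T)$) would only estimate $f$ where it is already essentially known.
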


\begin{rem}\label{rem-cip}
We discuss Theorem \ref{thm-stab} from various points of view.

(1)\ \ At a first glance, the statement of Theorem \ref{thm-stab} resembles that of \cite[Theorem 2.3]{JLY17a} to a large extent. Indeed, as the majority of existing literature did, here we also linearize the system to consider the governing equation of the difference $u(p)-u(q)$, which partially reduces Problem \ref{prob-cip} to an inverse source problem. Even though, Theorem \ref{thm-stab} is nontrivial because one should deal with the source term
\begin{equation}\label{eq-source}
\rdiv((p-q)a\nb u(q))
\end{equation}
as that in \cite{IY03}. However, in our case there seems no $H^{-1}$ Carleman estimate and thus we cannot evaluate \eqref{eq-source} simply by $\|p-q\|_{L^2(\Om)}$. As a result, we should treat \eqref{eq-source} in the $L^2$ sense, so that we have to argue more to dominate the $H^1$-norm of $p-q$. Hence, although \eqref{eq-stab-a} gives an $H^1$ estimate which looks stronger than that in \cite{IY03}, the fact is that we fail to provide an $L^2$ one instead.

(2)\ \ In Theorem \ref{thm-stab}, the unknown coefficient $p$ is restricted in the admissible set $\cU$. According to the definition \eqref{eq-def-adm}, it means that the partial Cauchy data of $p$ is known on the subboundary $\Ga$, which seems reasonable because the observation is taken there. Moreover, the boundary operator $\cB$ defined in \eqref{eq-def-B} becomes linear on $\Ga\times(-T,T)$ within the admissible set $\cU$, which allows us to write
\[\cB u(p)-\cB u(q)=h_0\,a\nb(u(p)-u(q))\cdot\nu=\cB(u(p)-u(q))\quad\mbox{on }\Ga\times(-T,T)\]
in the observation data \eqref{eq-def-obs}.

(3)\ \ We explain the geometry of the observation data in the assumption \eqref{eq-asp-obs}. Since $d$ is strictly positive in $\ov\Om\,$, by definition we know $\Om(\de)=\Om$ for $0\le\de<\min_{x\in\ov\Om}d(x)$. Therefore, if we fix $\de_0=0$, then \eqref{eq-asp-obs} requires $\Ga=\pa\Om$ and $\pa\om\supset\pa\Om$, that is, the observation should be taken on the whole boundary. Nevertheless, in this case \eqref{eq-stab-a} becomes a global H\"older estimate with sufficiently small $\de>0$. On the other hand, if one attempt to shrink the observation region, then the estimate \eqref{eq-stab-a} tends to be local. On the opposite extreme, since $\Om(\de)=\emptyset$ for any $\de\ge\|d\|_{C(\ov\Om)}$, our result becomes trivial with large $\de$. As a result, the choice of $d$ in the weight function plays a delicate role in the balance between the cost of measurements and the stability. The optimal choice of $d$ seems to be an interesting topic, but we will not discuss it in this paper.
\end{rem}

\Section{Proof of Theorem \ref{thm-stab}}\label{sec-proof}

In this section, we prove Theorem \ref{thm-stab} under the same assumptions therein. The proof basically follows the same line as that in \cite{JLY17a}, which relies heavily on the Carleman estimates in Lemma \ref{lem-CE}. However, many details are different from each other, and in our problem we should especially argue more for the $H^1$ estimate in \eqref{eq-stab-a}.

Henceforth, by $C>0$ we denote generic constants independent of the parameter $s>0$ in Carleman estimates and the observation data $D>0$, which may change line by line.

For clearness, we divide the proof into five steps.\medskip

{\bf Step 1}\ \ We start from the general preparation of notations and auxiliary functions. For simplicity, we abbreviate the hyperbolic operator $\cH_p$ as $\cH$ throughout this section. Fixing any $p,q\in\cU$, we set $f:=p-q$. Then it follows from the definition \eqref{eq-def-adm} that $f\in H^2(\Om)$ and $f=\pa_\nu f=0$ on $\Ga$. Together with the a priori bound, we conclude
\begin{equation}\label{eq-est-f0}
f\in H^2(\Om),\quad f=\nb f=0\mbox{ on }\Ga,\quad\|f\|_{H^1(\Om)}\le2M_1\le2M.
\end{equation}
Correspondingly, we set $v:=u(p)-u(q)$. According to \eqref{eq-gov-u}, \eqref{eq-asp-reg} and \eqref{eq-asp-bnd}, $v$ satisfies
\begin{align}
& \begin{cases}
\cH v=\cL f:=\rdiv(f\,a\nb u(q)) & \mbox{in }Q=\Om\times(-T,T),\\
v=\pa_tv=0 & \mbox{in }\Om\times\{0\},
\end{cases}\label{eq-gov-v}\\
& v\in\bigcap_{k=0}^2H^{4-k}(-T,T;H^k(\Om)),\quad\sum_{k=0}^2\|v\|_{H^{3-k}(-T,T;H^k(\Om))}\le2M_0\le2M.\label{eq-est-v}
\end{align}
We note that $\cL$ is a first order differential operator with respect to $f$.

By the Sobolev extension theorem, there exists $\wt v\in\bigcap_{k=0}^2H^{4-k}(-T,T;H^k(\Om))$ such that
\begin{align}
& \begin{cases}
\wt v=v,\ \cB\wt v=\cB v\quad\mbox{on }\Ga\times(-T,T), & \mbox{Type (I)},\\
\wt v=v\quad\mbox{in }\om\times(-T,T), & \mbox{Type (II)},
\end{cases}\label{eq-def-tv}\\
& \sum_{k=0}^2\|\wt v\|_{H^{3-k}(-T,T;H^k(\Om))}\le CM,\quad\sum_{k=0}^2\|\wt v\|_{H^{4-k}(-T,T;H^k(\Om))}\le CD.\label{eq-est-tv}
\end{align}
For later use, we further introduce
\[y_\ell:=\pa_t^\ell(v-\wt v\,),\quad\ell=0,1,2.\]
Then it is readily seen that $y_\ell\in\bigcap_{k=0}^2H^{4-\ell-k}(-T,T;H^k(\Om))$ ($\ell=0,1,2$), and $y_0,y_1,y_2$ satisfy
\begin{equation}\label{eq-gov-y}
\begin{aligned}
& \cH y_0=\cL f-\cH\wt v\,,\quad\cH y_1=\cA'y_0+\cL'f-\pa_t(\cH\wt v\,),\\
& \cH y_2=2\cA'y_1+\cA''y_0+\cL''f-\pa_t^2(\cH\wt v\,),
\end{aligned}
\end{equation}
where we define
\begin{alignat*}{2}
\cA'w & :=\rdiv(p(\pa_ta)\nb w)+(\pa_tb)\cdot\nb w+(\pa_tc)w, & \quad\cL'f & :=\pa_t\,\rdiv(f\,a\nb u(q)),\\
\cA''w & :=\rdiv(p(\pa_t^2a)\nb w)+(\pa_t^2b)\cdot\nb w+(\pa_t^2c)w, & \quad\cL''f & :=\pa_t^2\,\rdiv(f\,a\nb u(q)).
\end{alignat*}
Moreover, it follows from \eqref{eq-def-tv}, \eqref{eq-est-v} and \eqref{eq-est-tv} that for $\ell=0,1,2$,
\begin{align}
& \begin{cases}
y_\ell=\cB y_\ell=0\quad\mbox{on }\Ga\times(-T,T), & \mbox{Type (I)},\\
y_\ell=0\quad\mbox{in }\om\times(-T,T), & \mbox{Type (II)},
\end{cases}\label{eq-y-0}\\
& \sum_{k=0}^2\|y_\ell\|_{H^{3-\ell-k}(-T,T;H^k(\Om))}\le CM.\label{eq-est-y}
\end{align}

Note that $y_\ell$ ($\ell=0,1,2$) may not vanish outside the observation region, which prevents us from applying the Carleman estimates in Lemma \ref{lem-CE}. To this end, it is necessary to introduce a cutoff function as that in \cite{JLY17a}. Recall the constant $\de_0\ge0$ and the function $d\in C^2(\ov\Om)$ given in advance. For any $\de>\de_0$, we fix a constant $\de_1\in(\de_0,\de)$, e.g., $\de_1=\f{\de_0+\de}2$. With the level set $Q(\de)$ defined in \eqref{eq-def-level}, we define $\mu\in C^\infty(\ov Q)$ such that
\begin{align}
& 0\le\mu\le1,\quad\mu=\begin{cases}
1 & \mbox{in }Q(\de_1),\\
0 & \mbox{in }\ov Q\setminus Q(\de_0),
\end{cases}\label{eq-def-mu}\\
& \mu_0(x):=\mu(x,0)=\|\mu(x,\,\cdot\,)\|_{C[-T,T]},\ \forall\,x\in\ov\Om\,.\label{eq-def-mu0}
\end{align}
Especially, we see that the condition \eqref{eq-def-mu0} is possible by the definition \eqref{eq-def-psi} of $\psi$. Meanwhile, owing to the assumption $T^2>\|d\|_{C(\ov\Om)}$ in \eqref{eq-asp-obs}, we can choose $\be\in(0,1)$ such that $\be T^2>\|d\|_{C(\ov\Om)}$, indicating $\psi(\,\cdot\,,\pm T)<0$ in $\ov\Om\,$. Together with the assumption $\ov{\Om(\de_0)}\subset\Om\cup\Ga$ in \eqref{eq-asp-obs}, we conclude
\begin{equation}\label{eq-mu-0}
\supp\,\mu\subset\ov{Q(\de_0)}\subset(\Om\cup\Ga)\times(-T,T).
\end{equation}

Now we further set
\[z_\ell:=\mu\,y_\ell=\mu\,\pa_t^\ell(v-\wt v\,),\quad\ell=0,1,2.\]
Employing \eqref{eq-y-0}, \eqref{eq-mu-0} and the assumption $\pa\om\supset\Ga$ in \eqref{eq-asp-obs}, we obtain for $\ell=0,1,2$ that
\begin{equation}\label{eq-cond-z}
\supp\,z_\ell\subset\ov{Q(\de_0)}\,,\quad\pa_t^kz_\ell(\,\cdot\,,\pm T)=0\ (k=0,1,2),\quad z_\ell=\cB z_\ell=0\mbox{ on }\pa\Om\times(-T,T).
\end{equation}
On the other hand, it is obvious that $z_\ell$ share the same regularity as that of $y_\ell$, i.e., $z_\ell\in H^2(Q)$ ($\ell=0,1,2$) and especially $\pa_tz_0,\pa_tz_1\in H^2(Q)$. By \eqref{eq-gov-y} and direct calculations, we deduce
\begin{align}
\cH z_0 & =\mu\,(\cL f-\cH\wt v\,)+[\cH,\mu]y_0=:F_0,\label{eq-gov-z0}\\
\cH z_1 & =\mu\,(\cL'f-\pa_t(\cH\wt v\,))+\cA'z_0+[\cH,\mu]y_1-[\cA',\mu]y_0=:F_1,\label{eq-gov-z1}\\
\cH z_2 & =\mu\,(\cL''f-\pa_t^2(\cH\wt v\,))+2\cA'z_1+\cA''z_0+[\cH,\mu]y_2-2[\cA',\mu]y_1-[\cA'',\mu]y_0=:F_2,\label{eq-gov-z2}
\end{align}
where
\begin{align*}
[\cH,\mu]w & :=2\,\{(\pa_t\mu)\pa_tw-p\,a\nb\mu\cdot\nb w\}+\left\{\pa_t^2\mu-\rdiv(p\,a\nb\mu)-b\cdot\nb\mu\right\}w,\\
[\cA',\mu]w & :=2\,p(\pa_ta)\nb\mu\cdot\nb w+\left\{\rdiv(p(\pa_ta)\nb\mu)+(\pa_tb)\cdot\nb\mu\right\}w,\\
[\cA'',\mu]w & :=2\,p(\pa_t^2a)\nb\mu\cdot\nb w+\left\{\rdiv(p(\pa_t^2a)\nb\mu)+(\pa_t^2b)\cdot\nb\mu\right\}w.
\end{align*}
Similarly to \cite{JLY17a}, it turns out that $[\cH,\mu]$, $[\cA',\mu]$ and $[\cA'',\mu]$ are all first order differential operators which only involve derivatives of $\mu$. Then the definition \eqref{eq-def-mu} of $\mu$ implies
\begin{equation}\label{eq-cmt-0}
[\cH,\mu]w=[\cA',\mu]w=[\cA'',\mu]w=0\quad\mbox{in }\ov{Q(\de_1)}\cup\left(\ov Q\setminus Q(\de_0)\right).
\end{equation}

{\bf Step 2}\ \ Now that $z_\ell$ ($\ell=0,1,2$) satisfy \eqref{eq-cond-z}, we are well prepared to apply the Carleman estimates in Lemma \ref{lem-CE} to $z_\ell$. We will utilize estimates \eqref{eq-est-f0}, \eqref{eq-est-tv} and \eqref{eq-est-y} repeatedly throughout the proof, and take advantage of the properties of the cutoff function $\mu$ as well as the weight function $\vp$.

Applying the first order Carleman estimate \eqref{eq-CE-1} in Lemma \ref{lem-CE} to \eqref{eq-gov-z2}, we have
\begin{equation}\label{eq-est-z2}
\int_{Q(\de_0)}s\left(|\pa_tz_2|^2+|\nb z_2|^2+s^2|z_2|^2\right)\e^{2s\vp}\,\rd x\rd t\le C\|F_2\,\e^{s\vp}\|_{L^2(Q(\de_0))}^2,\quad\forall\,s\gg1.
\end{equation}
Our aim in this step is to give an estimate for $\|F_2\,\e^{s\vp}\|_{L^2(Q(\de_0))}$. To this end, we apply \eqref{eq-CE-1}--\eqref{eq-CE-2a} in Lemma \ref{lem-CE} to \eqref{eq-gov-z0}--\eqref{eq-gov-z1} to obtain for $\ell=0,1$ that
\begin{align}
\sum_{|\ga|\le2}\|(\pa_x^\ga z_\ell)\e^{s\vp}\|_{L^2(Q(\de_0))}^2 & =\int_{Q(\de_0)}\left(\sum_{i,j=1}^n|\pa_i\pa_jz_\ell|^2+|\nb z_\ell|^2+|z_\ell|^2\right)\e^{2s\vp}\,\rd x\rd t\nonumber\\
& \le C\int_{Q(\de_0)}\left(\f1s|\pa_tF_\ell|^2+s|F_\ell|^2\right)\e^{2s\vp}\,\rd x\rd t,\quad\forall\,s\gg1.\label{eq-est-z01}
\end{align}
To proceed, we should estimate $\|(\pa_t^kF_\ell)\e^{s\vp}\|_{L^2(Q(\de_0))}$ for $k,\ell=0,1$. First, we combine the properties \eqref{eq-def-mu}--\eqref{eq-def-mu0} with \eqref{eq-cmt-0}, \eqref{eq-est-tv} and \eqref{eq-est-y} to dominate
\begin{align}
\|F_0\,\e^{s\vp}\|_{L^2(Q(\de_0))}^2 & \le C\int_{Q(\de_0)}\left(|\mu\,\cL f|^2+|\mu\,\cH\wt v|^2+|[\cH,\mu]y_0|^2\right)\e^{2s\vp}\,\rd x\rd t\nonumber\\
& \le C\int_{Q(\de_0)}\mu^2\left(|f|^2+|\nb f|^2\right)\e^{2s\vp}\,\rd x\rd t+C\exp\left(2s\max_{\ov{Q(\de_0)}}\vp\right)\|\cH\wt v\|_{L^2(Q(\de_0))}^2\nonumber\\
& \quad\,+C\int_{Q(\de_0)\setminus Q(\de_1)}|[\cH,\mu]y_0|^2\e^{2s\vp}\,\rd x\rd t\nonumber\\
& \le C\int_Q\mu^2\left(|f|^2+|\nb f|^2\right)\e^{2s\vp}\,\rd x\rd t+C\,\e^{Cs}\|\wt v\|_{H^2(Q)}^2\nonumber\\
& \quad\,+C\exp\left(2s\max_{\ov{Q(\de_0)}\setminus Q(\de_1)}\vp\right)\|y_0\|_{H^1(Q)}^2\nonumber\\
& \le C\int_Q\mu_0^2\left(|f|^2+|\nb f|^2\right)\e^{2s\vp}\,\rd x\rd t+C\,\e^{Cs}D^2+C\,\e^{2\eta_1s}M^2,\label{eq-est-F0}
\end{align}
where we used $\vp=\e^{\la\psi}\le\e^{\la\de_1}=:\eta_1$ in $\ov{Q(\de_0)}\setminus Q(\de_1)$ by the definition \eqref{eq-def-level}. Similarly, by
\[\pa_tF_0=\mu\,(\cL'f-\pa_t(\cH\wt v\,))+(\pa_t\mu)(\cL f-\cH\wt v\,)+\pa_t([\cH,\mu]y_0)\]
we further estimate
\begin{align}
\|(\pa_tF_0)\e^{s\vp}\|_{L^2(Q(\de_0))}^2 & \le C\int_{Q(\de_0)}\mu^2\left(|f|^2+|\nb f|^2\right)\e^{2s\vp}\,\rd x\rd t+C\,\e^{Cs}\|\pa_t(\cH\wt v\,)\|_{L^2(Q)}^2\nonumber\\
& \quad\,+C\,\e^{2\eta_1s}\left(\|f\|_{H^1(\Om)}^2+\|\cH\wt v\|_{L^2(Q)}^2+\|y_0\|_{H^2(Q)}^2\right)\nonumber\\
& \le C\int_Q\mu_0^2\left(|f|^2+|\nb f|^2\right)\e^{2s\vp}\,\rd x\rd t+C\,\e^{Cs}D^2+C\,\e^{2\eta_1s}M^2,\label{eq-est-tF0}
\end{align}
where we turned to the a priori estimate \eqref{eq-est-f0} to treat the term $(\pa_t\mu)\cL f$. Hence, substituting \eqref{eq-est-F0}--\eqref{eq-est-tF0} into \eqref{eq-est-z01} with $\ell=0$ yields
\begin{align}
\sum_{|\ga|\le2}\|(\pa_x^\ga z_0)\e^{s\vp}\|_{L^2(Q(\de_0))}^2 & \le Cs\int_Q\mu_0^2\left(|f|^2+|\nb f|^2\right)\e^{2s\vp}\,\rd x\rd t\nonumber\\
& \quad\,+C\,\e^{Cs}D^2+Cs\,\e^{2\eta_1s}M^2,\quad\forall\,s\gg1.\label{eq-est-z0}
\end{align}

Next, we deal with $F_1$ defined in \eqref{eq-gov-z1}. Noting the fact that $\cA'$ is a second order differential operator in space, we apply \eqref{eq-est-z0}, \eqref{eq-est-tv} and \eqref{eq-est-y} to derive
\begin{align}
\|F_1\,\e^{s\vp}\|_{L^2(Q(\de_0))}^2 & \le C\int_{Q(\de_0)}\mu^2\left(|f|^2+|\nb f|^2\right)\e^{2s\vp}\,\rd x\rd t+C\sum_{|\ga|\le2}\|(\pa_x^\ga z_0)\e^{s\vp}\|_{L^2(Q(\de_0))}^2\nonumber\\
& \quad+C\,\e^{Cs}\|\pa_t(\cH\wt v\,)\|_{L^2(Q)}^2+C\,\e^{2\eta_1s}\left(\|y_1\|_{H^1(Q)}+\|y_0\|_{H^1(Q)}\right)\nonumber\\
& \le Cs\int_Q\mu_0^2\left(|f|^2+|\nb f|^2\right)\e^{2s\vp}\,\rd x\rd t+C\,\e^{Cs}D^2+C\,\e^{2\eta_1s}M^2\label{eq-est-F1}
\end{align}
for all $s\gg1$. To bound $\|(\pa_tF_1)\e^{s\vp}\|_{L^2(Q(\de_0))}$, we calculate
\begin{align*}
\pa_tF_1 & =\cA'z_1+\cA''z_0+\mu\,(\cL''f-\pa_t^2(\cH\wt v\,))+(\pa_t\mu)(\cL'f-\pa_t(\cH\wt v\,))\\
& \quad\,+\pa_t([\cH,\mu]y_1-[\cA',\mu]y_0)+\cA'((\pa_t\mu)y_0).
\end{align*}
Similarly to the treatment for $\pa_tF_0$, we employ \eqref{eq-est-z0} again to estimate
\begin{align}
& \quad\,\,\|(\pa_tF_1)\e^{s\vp}\|_{L^2(Q(\de_0))}^2\nonumber\\
& \le C\sum_{\ell=0}^1\sum_{|\ga|\le2}\|(\pa_t^\ga z_\ell)\e^{s\vp}\|_{L^2(Q(\de_0))}^2+C\int_Q\mu_0^2\left(|f|^2+|\nb f|^2\right)\e^{2s\vp}\,\rd x\rd t+C\,\e^{Cs}\|\pa_t^2(\cH\wt v\,)\|_{L^2(Q)}^2\nonumber\\
& \quad\,+C\,\e^{2\eta_1s}\left(\|f\|_{H^1(\Om)}+\|\pa_t(\cH\wt v\,)\|_{L^2(Q)}^2+\|y_1\|_{H^2(Q)}^2+\|y_0\|_{H^2(Q)}^2\right)\nonumber\\
& \le C\sum_{|\ga|\le2}\|(\pa_x^\ga z_1)\e^{s\vp}\|_{L^2(Q(\de_0))}^2+Cs\int_Q\mu_0^2\left(|f|^2+|\nb f|^2\right)\e^{2s\vp}\,\rd x\rd t\nonumber\\
& \quad\,+C\,\e^{Cs}D^2+C\,\e^{2\eta_1s}M^2,\quad\forall\,s\gg1.\label{eq-est-tF1}
\end{align}
Substituting \eqref{eq-est-F1}--\eqref{eq-est-tF1} into \eqref{eq-est-z01} with $\ell=1$, we deduce
\begin{align*}
\sum_{|\ga|\le2}\|(\pa_x^\ga z_1)\e^{s\vp}\|_{L^2(Q(\de_0))}^2 & \le\f Cs\sum_{|\ga|\le2}\|(\pa_x^\ga z_1)\e^{s\vp}\|_{L^2(Q(\de_0))}^2+Cs^2\int_Q\mu_0^2\left(|f|^2+|\nb f|^2\right)\e^{2s\vp}\,\rd x\rd t\\
& \quad\,+C\,\e^{Cs}D^2+Cs^2\e^{2\eta_1s}M^2,\quad\forall\,s\gg1.
\end{align*}
Choosing $s>0$ sufficiently large, we can absorb the first term on the right-hand side into the left-hand side and conclude
\begin{align}
\sum_{|\ga|\le2}\|(\pa_x^\ga z_1)\e^{s\vp}\|_{L^2(Q(\de_0))}^2 & \le Cs^2\int_Q\mu_0^2\left(|f|^2+|\nb f|^2\right)\e^{2s\vp}\,\rd x\rd t\nonumber\\
& \quad\,+C\,\e^{Cs}D^2+Cs^2\e^{2\eta_1s}M^2,\quad\forall\,s\gg1.\label{eq-est-z1}
\end{align}
Using \eqref{eq-est-tv}, \eqref{eq-est-y} and \eqref{eq-cmt-0} again, we apply \eqref{eq-est-z0} and \eqref{eq-est-z1} to the definition \eqref{eq-gov-z2} of $F_2$ and arrive at the estimate
\begin{align}
\|F_2\,\e^{s\vp}\|_{L^2(Q(\de_0))}^2 & \le C\int_Q\mu_0^2\left(|f|^2+|\nb f|^2\right)\e^{2s\vp}\,\rd x\rd t+C\,\e^{Cs}\|\pa_t^2(\cH\wt v\,)\|_{L^2(Q)}^2\nonumber\\
& \quad\,+C\sum_{\ell=0}^1\sum_{|\ga|\le2}\|(\pa_x^\ga z_\ell)\e^{s\vp}\|_{L^2(Q(\de_0))}^2+C\,\e^{2\eta_1s}\sum_{\ell=0}^2\|y_\ell\|_{H^1(Q)}^2\nonumber\\
& \le Cs^2\int_Q\mu_0^2\left(|f|^2+|\nb f|^2\right)\e^{2s\vp}\,\rd x\rd t+C\,\e^{Cs}D^2+Cs^2\e^{2\eta_1s}M^2\label{eq-est-F2}
\end{align}
for all $s\gg1$.\medskip

{\bf Step 3}\ \ We further introduce
\[\cP w:=\cH w+b\cdot\nb w+c\,w=\pa_t^2w-\rdiv(p\,a\nb w),\quad \wt z:=\e^{s\vp}z_2.\]
Then simple calculations yield
\begin{align}
& \pa_t\wt z=\e^{s\vp}(\pa_tz_2+s(\pa_t\vp)z_2),\quad\nb\wt z=\e^{s\vp}(\nb z_2+s\,z_2\nb\vp),\label{eq-z-z2}\\
& \cP\wt z=\e^{s\vp}\left\{\cP z_2+2s\,((\pa_t\vp)\pa_tz_2-p\,a\nb\vp\cdot\nb z_2)+2\left(\cP\vp+s(|\pa_t\vp|^2-p\,a\nb\vp\cdot\nb\vp)\right)z_2\right\}.\label{eq-z-z2'}
\end{align}

We attempt to give upper and lower estimates for
\[I_0:=2\int_{-T}^0\!\int_\Om(\pa_t\wt z\,)(\cP\wt z\,)\,\rd x\rd t.\]
First, since $\cP z_2=F_2+b\cdot\nb z_2+c\,z_2$, the application of \eqref{eq-est-z2} immediately gives
\[\int_{Q(\de_0)}|\cP z_2|^2\,\e^{2s\vp}\,\rd x\rd t\le\int_{Q(\de_0)}\left\{|F_2|^2+C(|\nb z_2|^2+|z_2|^2)\right\}\e^{2s\vp}\,\rd x\rd t\le C\|F_2\,\e^{s\vp}\|_{L^2(Q(\de_0))}^2\]
for all $s\gg1$. Noticing $\supp\,z=\supp\,z_2\subset\ov{Q(\de_0)}$ and using \eqref{eq-est-z2} again, we employ \eqref{eq-z-z2}--\eqref{eq-z-z2'} to estimate $I_0$ from above as
\begin{align}
I_0 & \le2\int_{-T}^0\!\int_\Om|\pa_t\wt z\,||\cP\wt z\,|\,\rd x\rd t\le2\int_{Q(\de_0)}|\pa_t\wt z\,||\cP\wt z\,|\,\rd x\rd t\nonumber\\
& \le C\int_{Q(\de_0)}(|\pa_tz_2|+s|z_2|)\,\{|\cP z_2|+Cs\,(|\pa_tz_2|+|\nb z_2|+s|z_2|)\}\,\e^{2s\vp}\,\rd x\rd t\nonumber\\
& \le C\int_{Q(\de_0)}\left\{|\cP z_2|^2+s\left(|\pa_tz_2|^2+|\nb z_2|^2\right)+s^3|z_2|^2\right\}\e^{2s\vp}\,\rd x\rd t\nonumber\\
& \le C\|F_2\,\e^{s\vp}\|_{L^2(Q(\de_0))}^2,\quad\forall\,s\gg1.\label{eq-est-I0u}
\end{align}
On the other hand, it follows from \eqref{eq-cond-z} that
\[\wt z=0\mbox{ on }\pa\Om\times(-T,T),\quad\wt z=\pa_t\wt z=0\mbox{ in }\Om\times\{-T\},\]
which allows us to perform integration by parts to estimate $I_0$ from below as
\begin{align}
I_0 & =\int_\Om\int_{-T}^0\pa_t\left(|\pa_t\wt z\,|^2\right)\rd t\rd x-2\int_{-T}^0\!\int_{\pa\Om}(\pa_t\wt z\,)(\cB\wt z\,)\,\rd\sigma\rd t+2\int_{-T}^0\!\int_\Om p\,a\nb\wt z\cdot\nb(\pa_t\wt z\,)\,\rd x\rd t\nonumber\\
& =\|\pa_t\wt z(\,\cdot\,,0)\|_{L^2(\Om)}^2+\sum_{i,j=1}^n\int_\Om p\int_{-T}^0a_{ij}\,\pa_t((\pa_i\wt z\,)\pa_j\wt z\,)\rd t\rd x\nonumber\\
& \ge\sum_{i,j=1}^n\int_\Om p\left\{(a_{ij}(\pa_i\wt z\,)\pa_j\wt z\,)(\,\cdot\,,0)-\int_{-T}^0(\pa_ta_{ij})(\pa_i\wt z\,)\pa_j\wt z\,\rd t\right\}\rd x\nonumber\\
& =\int_\Om p\,(a\nb\wt z\cdot\nb\wt z\,)(\,\cdot\,,0)\,\rd x-\int_{-T}^0\!\int_\Om p(\pa_ta)\nb\wt z\cdot\nb\wt z\,\rd x\rd t\nonumber\\
& \ge\ka_0\ka_1\|\nb\wt z(\,\cdot\,,0)\|_{L^2(\Om)}^2-C\int_{Q(\de_0)}|\nb\wt z\,|^2\,\rd x\rd t,\label{eq-est-I0d}
\end{align}
where we applied the lower bounds in \eqref{eq-asp-0} and \eqref{eq-def-adm} to obtain \eqref{eq-est-I0d}. Utilizing \eqref{eq-z-z2} and \eqref{eq-est-z2} again, we further estimate
\begin{align*}
\int_{Q(\de_0)}|\nb\wt z\,|^2\,\rd x\rd t & \le C\int_{Q(\de_0)}\left(|\nb z_2|+Cs|z_2|\right)^2\e^{2s\vp}\,\rd x\rd t\le C\int_{Q(\de_0)}\left(|\nb z_2|^2+s^2|z_2|^2\right)\e^{2s\vp}\,\rd x\rd t\\
& \le\f Cs\|F_2\,\e^{s\vp}\|_{L^2(Q(\de_0))}^2,\quad\forall\,s\gg1.
\end{align*}
Combining the above inequality with \eqref{eq-est-I0d}, \eqref{eq-est-I0u} and \eqref{eq-est-F2}, we obtain for all $s\gg1$ that
\begin{align}
\|\nb\wt z(\,\cdot\,,0)\|_{L^2(\Om)}^2 & \le C\|F_2\,\e^{s\vp}\|_{L^2(Q(\de_0))}^2+\f Cs\|F_2\,\e^{s\vp}\|_{L^2(Q(\de_0))}^2\le C\|F_2\,\e^{s\vp}\|_{L^2(Q(\de_0))}^2\nonumber\\
& \le Cs^2\int_{Q(\de_0)}\mu_0^2\left(|f|^2+|\nb f|^2\right)\e^{2s\vp}\,\rd x\rd t+C\,\e^{Cs}D^2+Cs^2\,\e^{2\eta_1s}M^2.\label{eq-est-wtz}
\end{align}

Next, we shall relate the above estimate of $\nb\wt z(\,\cdot\,,0)$ with that of $f$. In fact, by the definition of $\wt z$, we take $t=0$ in \eqref{eq-gov-v} and find
\[\wt z(\,\cdot\,,0)=(\e^{s\vp}z_2)(\,\cdot\,,0)=\e^{s\vp_0}\left(\mu\,\pa_t^2(v-\wt v\,)\right)(\,\cdot\,,0)=\e^{s\vp_0}\mu_0\left(\cL_0f-\pa_t^2\wt v(\,\cdot\,,0)\right),\]
where
\begin{align}
& \vp_0(x):=\vp(x,0)=\e^{\la d(x)},\label{eq-def-phi0}\\
& \cL_0f:=\rdiv(f\,a(\,\cdot\,,0)\nb u_0)=a(\,\cdot\,,0)\nb u_0\cdot\nb f+\rdiv(a(\,\cdot\,,0)\nb u_0)\,f.\label{eq-def-L0}
\end{align}
Hence, by $\cL_0(\mu_0f)=\mu_0\,\cL_0f+f\,a(\,\cdot\,,0)\nb u_0\cdot\nb\mu_0$, we obtain
\begin{align*}
\nb(\e^{s\vp_0}\cL_0(\mu_0f)) & =\nb\wt z(\,\cdot\,,0)+\nb\left\{\e^{s\vp_0}\left(\mu_0\,\pa_t^2\wt v(\,\cdot\,,0)+f\,a(\,\cdot\,,0)\nb u_0\cdot\nb\vp_0\right)\right\}\\
& =\nb\wt z(\,\cdot\,,0)+\e^{s\vp_0}\big[\mu_0\left\{s\left(\pa_t^2\wt v(\,\cdot\,,0)\right)\nb\vp_0+\nb\pa_t^2\wt v(\,\cdot\,,0)\right\}+\left(\pa_t^2\wt v(\,\cdot\,,0)\right)\nb\mu_0\\
& \quad\,+s\,(f\,a(\,\cdot\,,0)\nb u_0\cdot\nb\mu_0)\nb\vp_0+\nb(f\,a(\,\cdot\,,0)\nb u_0\cdot\nb\mu_0)\big].
\end{align*}
Recalling the definition \eqref{eq-def-level} of $\Om(\de)$, we see $\supp(\nb\mu_0)\subset\ov{\Om(\de_0)}\setminus\Om(\de_1)$ and $\vp_0\le\eta_1$ in $\ov{\Om(\de_0)}\setminus\Om(\de_1)$. By the same argument as before, we apply \eqref{eq-est-wtz} to bound
\begin{align}
\int_\Om\left|\nb(\e^{s\vp_0}\cL_0(\mu_0f))\right|^2\,\rd x & \le2\|\nb\wt z(\,\cdot\,,0)\|_{L^2(\Om)}^2+C\,\e^{Cs}\int_\Om\left(s^2|\pa_t^2\wt v(\,\cdot\,,0)|^2+|\nb\pa_t^2\wt v(\,\cdot\,,0)|^2\right)\rd x\nonumber\\
& \quad\,+C\int_{\Om(\de_0)\setminus\Om(\de_1)}\left\{|\pa_t^2\wt v(\,\cdot\,,0)|^2+s^2|f|^2+\left(|f|^2+|\nb f|^2\right)\right\}\e^{2s\vp_0}\,\rd x\nonumber\\
& \le2\|\nb\wt z(\,\cdot\,,0)\|_{L^2(\Om)}^2+C\,\e^{Cs}D^2+Cs^2\,\e^{2\eta_1s}M^2\nonumber\\
& \le Cs^2\int_{Q(\de_0)}\mu_0^2\left(|f|^2+|\nb f|^2\right)\e^{2s\vp}\,\rd x\rd t\nonumber\\
& \quad\,+C\,\e^{Cs}D^2+Cs^2\,\e^{2\eta_1s}M^2,\quad\forall\,s\gg1,\label{eq-est-L0f}
\end{align}
where we used the Sobolev embedding $C[-T,T]\subset H^1(-T,T)$ and \eqref{eq-est-tv} to dominate
\[\|\pa_t^2\wt v(\,\cdot\,,0)\|_{H^\ell(\Om)}\le\|\pa_t^2\wt v\|_{C([-T,T];H^\ell(\Om))}\le C\|\pa_t^2\wt v\|_{H^1(-T,T;H^\ell(\Om))}\le\begin{cases}
CM, & \ell=0,\\
CD, & \ell=1.
\end{cases}\]

{\bf Step 4}\ \ In order to relate the above estimate \eqref{eq-est-L0f} with the $H^1$-norm of $f$, we need the following Carleman estimate for the first order differential operator $\cL_0$ in \eqref{eq-def-L0}.

\begin{lem}\label{lem-CE-L0}
Let $\cL_0$ and $\vp_0$ be defined in \eqref{eq-def-L0} and \eqref{eq-def-phi0} respectively, and assume \eqref{eq-asp-1}. Then there exist constants $C>0$ and $s_2>0$ such that
\[\int_\Om\left(|g|^2+|\nb g|^2\right)\e^{2s\vp_0}\,\rd x\le C\int_\Om|\nb(\e^{s\vp_0}\cL_0g)|^2\,\rd x\]
holds for all $s\ge s_2$ and $g\in H_0^2(\Om)$.
\end{lem}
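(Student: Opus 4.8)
The plan is to prove this as a Carleman-type estimate for the first-order operator $\cL_0 g = a(\,\cdot\,,0)\nb u_0\cdot\nb g + \rdiv(a(\,\cdot\,,0)\nb u_0)\,g$, exploiting the non-degeneracy condition \eqref{eq-asp-1} which says that the vector field $a(\,\cdot\,,0)\nb u_0$ is transversal to the level sets of $d$, uniformly in $\Om$. Write $G := \e^{s\vp_0}\cL_0 g$ and $h := \e^{s\vp_0}g$. Since $\nb\vp_0 = \la\,\vp_0\,\nb d$, a direct computation gives $\e^{s\vp_0}\cL_0 g = a(\,\cdot\,,0)\nb u_0\cdot\nb h - s\la\vp_0\,(a(\,\cdot\,,0)\nb u_0\cdot\nb d)\,h + \rdiv(a(\,\cdot\,,0)\nb u_0)\,h$, i.e. $G = \cL_0 h - s\la\vp_0\,r\,h$ where $r := a(\,\cdot\,,0)\nb u_0\cdot\nb d$ satisfies $|r|\ge\ka_2$ a.e. by \eqref{eq-asp-1}. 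The idea is that the large zeroth-order term $s\la\vp_0 r h$ dominates, so we can solve for $h$ and then recover $\nb g$ from the equation. Concretely I would first bound $\int_\Om (s\la\vp_0)^2 r^2 h^2\,\rd x$ by $\int_\Om |G|^2\,\rd x$ plus a controllable remainder, and since $r^2\ge\ka_2^2$ and $\vp_0\ge\e^{\la\min d}>0$, this controls $\int_\Om s^2 h^2\,\rd x$, hence $\int_\Om |g|^2\e^{2s\vp_0}\,\rd x$.

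The concrete route: since $|\nb h|\le|\nb(\e^{s\vp_0}\cL_0 g)|$ is not directly available, instead take $h = \e^{s\vp_0}g \in H_0^1(\Om)$ and use that $\nb h = \e^{s\vp_0}(\nb g + s\la\vp_0 g\,\nb d)$ while $\nb G = \nb(\e^{s\vp_0}\cL_0 g)$ is the quantity appearing on the right-hand side. Taking the gradient of the identity $G = \cL_0 h - s\la\vp_0 r h$, one gets $\nb G = \nb(\cL_0 h) - s\la\nb(\vp_0 r h)$, and the leading term on the right is $-s\la\vp_0 r\,\nb h$ plus lower-order-in-$s$ terms. So $\int_\Om |\nb G|^2\,\rd x \ge \tfrac12 (s\la)^2\int_\Om \vp_0^2 r^2 |\nb h|^2\,\rd x - C(s)\int_\Om(\cdots)$, where the remainder involves $|\nb(\cL_0 h)|^2\lesssim |\nb h|^2 + |\nabla^2$-type terms, but since $\cL_0$ is only first order and $h\in H_0^2$... here one must be careful. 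A cleaner approach: integrate by parts. Multiply $G = \cL_0 h - s\la\vp_0 r h$ by $h$ and integrate; the term $\int \cL_0 h\cdot h$ can be integrated by parts (the vector field term $\int (V\cdot\nb h)h = -\tfrac12\int(\rdiv V)h^2$ using $h\in H_0^1$), producing $\int G h = -\int s\la\vp_0 r h^2 + \int(\text{l.o.t.})h^2$, which after using $|r|\ge\ka_2$ and absorbing gives $\int s h^2\,\rd x \le \tfrac Cs\int |G|^2\,\rd x$ — wait, this needs $\int Gh \le \|G\|\|h\|$, so one gets $s\ka_2\e^{\la\min d}\int h^2 \le \|G\|\,\|h\| + C\int h^2$, hence for $s$ large $\int h^2 \lesssim s^{-2}\|G\|^2$, i.e. $\int |g|^2\e^{2s\vp_0}\lesssim s^{-2}\int|G|^2$. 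But the statement requires the $L^2$-bound of $G$ via its gradient $\int|\nb G|^2$ — note $\|G\|_{L^2(\Om)}\le C\|\nb G\|_{L^2(\Om)}$ by Poincaré since $G\in H_0^1$ (because $g\in H_0^2$ forces $\cL_0 g$ and hence $G$ to vanish on $\pa\Om$), so this is fine.

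Having controlled $\int|g|^2\e^{2s\vp_0}\,\rd x$, recover $\nb g$ directly from the equation: $a(\,\cdot\,,0)\nb u_0\cdot\nb g = \cL_0 g - \rdiv(a(\,\cdot\,,0)\nb u_0)g$. This only controls the component of $\nb g$ along $a(\,\cdot\,,0)\nb u_0$, which is \emph{not} enough for the full $\nb g$ unless we also use that $g\in H_0^2(\Om)$ — indeed the extra regularity $g\in H^2$ lets us differentiate the equation and close the estimate on all derivatives. Alternatively, and more in the spirit of the gradient appearing on the right, one differentiates the whole identity and repeats the multiplier argument at the level of $\nb h$: from $\nb G = -s\la\vp_0 r\,\nb h + (\text{terms with }\le\text{ one derivative of }h\text{, coefficient }O(s))$, testing against $\nb h$ and integrating by parts (the top-order vector-field term integrates by parts cleanly because $h\in H_0^2$ so $\nb h$ has controlled boundary behavior when paired against the divergence) yields $s\int|\nb h|^2\,\rd x \le C\int|\nb G|^2\,\rd x + Cs\int|\nb h|^2 s^{-1}\,\rd x + C\int h^2(\text{with }s^2\text{ weights, already controlled})$; absorbing for $s$ large gives $\int|\nb h|^2\,\rd x\lesssim \int|\nb G|^2\,\rd x$, and since $\nb h = \e^{s\vp_0}(\nb g + s\la\vp_0 g\,\nb d)$ with the $s\la\vp_0 g\nb d$ part already estimated, we conclude $\int_\Om|\nb g|^2\e^{2s\vp_0}\,\rd x \le C\int_\Om|\nb(\e^{s\vp_0}\cL_0 g)|^2\,\rd x$ for $s\ge s_2$. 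Combining the two gives the claim.

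The main obstacle will be the gradient-level estimate in the last step: the operator $\cL_0$ has only a first-order principal part $V\cdot\nb$, so differentiating produces $\pa_i(V\cdot\nb h) = V\cdot\nb(\pa_i h) + (\pa_i V)\cdot\nb h$, and the term $(\pa_i V)\cdot\nb h$ is of the \emph{same} order as what we are trying to estimate. The point is that it comes with coefficient $O(1)$ while the good zeroth-order term $s\la\vp_0 r\,\pa_i h$ has coefficient $O(s)$, so for $s$ sufficiently large (depending on $\|a(\,\cdot\,,0)\nb u_0\|_{W^{1,\infty}}$, $\ka_2$, $\la$, and $\min d$) it is absorbed — but this absorption must be done carefully and uniformly, and it is exactly here that $g\in H_0^2(\Om)$ (rather than just $H^1$) is essential, both to justify the integrations by parts at the top order and to ensure $\nb G\in L^2$ makes sense as a genuine constraint. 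All constants depend only on $a(\,\cdot\,,0)$, $u_0$, $d$, $\la$, $\ka_2$ and $\Om$, consistent with the statement.
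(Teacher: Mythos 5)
Your proposal is correct and follows essentially the same route as the paper's proof: conjugate by the weight to get $\e^{s\vp_0}\cL_0g=\cL_0h-s\la\vp_0\,r\,h$ with $h=\e^{s\vp_0}g$ and $r=a(\,\cdot\,,0)\nb u_0\cdot\nb d$, use \eqref{eq-asp-1} to make the zeroth-order term dominant of order $s$, obtain the gradient bound by differentiating once and absorbing the $O(1)$ commutator terms for large $s$ (this is where $g\in H_0^2(\Om)$ enters), and finish by noting that $\e^{s\vp_0}\cL_0g\in H_0^1(\Om)$ so Poincar\'e converts the $L^2$-norm of $\e^{s\vp_0}\cL_0g$ into the gradient norm on the right-hand side. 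All four ingredients appear, in the same order, in the paper.

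The one substantive divergence is how the leading term $s^2\int|h|^2\,\rd x$ is extracted. You test the identity linearly against $h$ (and later against $\nb h$), which requires $\bigl|\int_\Om\vp_0\,r\,|h|^2\,\rd x\bigr|\ge\ka_2\e^{\la\min_{\ov\Om}d}\int_\Om|h|^2\,\rd x$, i.e.\ it silently requires $r$ to have a \emph{fixed sign}; assumption \eqref{eq-asp-1} only gives $|r|\ge\ka_2$ a.e. Under the regularity \eqref{eq-asp-0} the function $r$ is continuous, so on a connected $\Om$ it cannot change sign and your argument closes, but this should be said. The paper avoids the issue entirely by expanding the square, $\int|\cL_0h-s\la\vp_0rh|^2\ge s^2\la^2\int\vp_0^2r^2|h|^2-2s\la\int\vp_0r(\cL_0h)h$, which only uses $r^2\ge\ka_2^2$ pointwise, the cross term being reduced to $O(s)\int|h|^2$ by one integration by parts. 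For the gradient step the paper likewise just applies the resulting scalar estimate to $\nb g\in(H_0^1(\Om))^n$ and commutes $\pa_k$ with $\cL_0$, which spares you the extra $O(s)$ terms from $\nb(\vp_0 r)$ that your version must absorb. With either of these small repairs your outline yields a complete proof identical in substance to the paper's.
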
	

\begin{proof}
First we show that there exist constants $C>0$ and $s_3>0$ such that
\begin{equation}\label{eq-est-L0-0}
s^2\int_\Om|g|^2\e^{2s\vp_0}\,\rd x\le C\int_\Om|\cL_0g|^2\e^{2s\vp_0}\,\rd x,\quad\forall\,s\ge s_3,\ \forall\,g\in H_0^1(\Om)
\end{equation}
In fact, by setting $\wt g:=\e^{s\vp_0}g$, we calculate
\[e^{s\vp_0}\cL_0g=\e^{s\vp_0}\cL_0(\e^{-s\vp_0}\wt g\,)=\cL_0\wt g-s\,(a(\,\cdot\,,0)\nb u_0\cdot\nb\vp_0)\,\wt g\,.\]
By the assumption \eqref{eq-asp-1} and $d>0$ in $\ov\Om\,$, we obtain the lower bound
\[|a(\,\cdot\,,0)\nb u_0\cdot\nb\vp_0|=\la\,\e^{\la d}|a(\,\cdot\,,0)\nb u_0\cdot\nb d|\ge\la\ka_2.\]
Then we can estimate
\begin{align*}
\int_\Om|\cL_0g|^2\e^{2s\vp_0}\,\rd x & =\int_\Om|\cL_0\wt g-s\,(a(\,\cdot\,,0)\nb u_0\cdot\nb\vp_0)\,\wt g|^2\,\rd x\\
& \ge s^2\int_\Om|a(\,\cdot\,,0)\nb u_0\cdot\nb\vp_0|^2|\wt g|^2\,\rd x-s\,I_1\ge(\la\ka_2)^2s^2\int_\Om|\wt g|^2\,\rd x-s\,I_1,
\end{align*}
where
\[I_1:=-2\int_\Om(a(\,\cdot\,,0)\nb u_0\cdot\nb\vp_0)(\cL_0\wt g\,)\,\wt g\,\rd x.\]
By $g\in H_0^1(\Om)$, we perform integration by parts to treat $I_1$ as
\begin{align*}
I_1 & =-2\int_\Om(a(\,\cdot\,,0)\nb u_0\cdot\nb\vp_0)\left\{(a(\,\cdot\,,0)\nb u_0\cdot\nb\wt g\,)\,\wt g+\rdiv(a(\,\cdot\,,0)\nb u_0)|\wt g|^2\right\}\rd x\\
& =\int_\Om\{\rdiv((a(\,\cdot\,,0)\nb u_0\cdot\nb\vp_0)a(\,\cdot\,,0)\nb u_0)-2(a(\,\cdot\,,0)\nb u_0\cdot\nb\vp_0)\,\rdiv(a(\,\cdot\,,0)\nb u_0)\}|\wt g|^2\,\rd x
\end{align*}
According to the assumption \eqref{eq-asp-0}, both $\|a(\,\cdot\,,0)\|_{W^{1,\infty}(\Om)}$ and $\|u_0\|_{W^{2,\infty}(\Om)}$ are bounded, which indicates $|I_1|\le C\|\wt g\|_{L^2(\Om)}^2$ and thus
\[\int_\Om|\cL_0g|^2\e^{2s\vp_0}\,\rd x\ge\left((\la\ka_2)^2s^2-Cs\right)\int_\Om|\wt g|^2\,\rd x.\]
Therefore, there exists a constant $s_3>0$ such that the right-hand side is strictly positive for all $s\ge s_3$, which implies \eqref{eq-est-L0-0}.

Next, since $g\in H_0^2(\Om)$ gives $\nb g\in(H_0^1(\Om))^n$, we apply \eqref{eq-est-L0-0} to $\nb g$ to obtain
\begin{equation}\label{eq-est-L0-1}
s^2\int_\Om|\nb g|^2\e^{2s\vp_0}\,\rd x\le C\sum_{k=1}^n\int_\Om|\cL_0(\pa_kg)|^2\e^{2s\vp_0}\,\rd x,\quad\forall\,s\ge s_3.
\end{equation}
To further estimate the right-hand side of \eqref{eq-est-L0-1}, we calculate
\[\pa_k(\cL_0g)=\cL_0(\pa_kg)+\sum_{i,j=1}^n\{\pa_k(a_{ij}(\,\cdot\,,0)\pa_iu_0)\pa_jg+\pa_k\pa_i(a_{ij}(\,\cdot\,,0)\pa_ju_0)g\},\quad k=1,\ldots,n.\]
Since \eqref{eq-asp-0} also gives the boundedness of $\|a(\,\cdot\,,0)\|_{W^{2,\infty}(\Om)}$ and $\|u_0\|_{W^{3,\infty}(\Om)}$, we estimate
\[\int_\Om|\cL_0(\pa_kg)|^2\e^{2s\vp_0}\,\rd x\le\int_\Om|\pa_k(\cL_0g)|^2\e^{2s\vp_0}\,\rd x+C\int_\Om\left(|g|^2+|\nb g|^2\right)\e^{2s\vp_0}\,\rd x,\quad k=1,\ldots,n.\]
Substituting the above inequality into \eqref{eq-est-L0-1}, we obtain
\[s^2\int_\Om|\nb g|^2\e^{2s\vp_0}\,\rd x\le C\int_\Om|\nb(\cL_0g)|^2\e^{2s\vp_0}\,\rd x+C\int_\Om\left(|g|^2+|\nb g|^2\right)\e^{2s\vp_0}\,\rd x,\quad\forall\,s\ge s_3,\]
which, together with \eqref{eq-est-L0-0}, yields
\begin{align*}
s^2\int_\Om\left(|g|^2+|\nb g|^2\right)\e^{2s\vp_0}\,\rd x & \le C\int_\Om\left(|\cL_0g|^2+|\nb(\cL_0g)|^2\right)\e^{2s\vp_0}\,\rd x\\
& \quad\,+C\int_\Om\left(|g|^2+|\nb g|^2\right)\e^{2s\vp_0}\,\rd x,\quad\forall\,s\ge s_3.
\end{align*}
Then there exists a constant $s_2>s_3$ such that
\begin{equation}\label{eq-est-L0-2}
s^2\int_\Om\left(|g|^2+|\nb g|^2\right)\e^{2s\vp_0}\,\rd x\le C\int_\Om\left(|\cL_0g|^2+|\nb(\cL_0g)|^2\right)\e^{2s\vp_0}\,\rd x,\quad\forall\,s\ge s_2.
\end{equation}

Finally, it follows from $\nb(e^{s\vp_0}\cL_0g)=\e^{s\vp_0}\{\nb(\cL_0g)+s(\cL_0g)\nb\vp_0\}$ that
\[\int_\Om|\nb(\cL_0g)|^2\e^{2s\vp_0}\,\rd x\le2\int_\Om|\nb(\e^{s\vp_0}\cL_0g)|^2\,\rd x+Cs^2\int_\Om|\cL_0g|^2\e^{2s\vp_0}\,\rd x.\]
Applying the above estimate to \eqref{eq-est-L0-2}, we obtain
\[s^2\int_\Om\left(|g|^2+|\nb g|^2\right)\e^{2s\vp_0}\,\rd x\le Cs^2\|\e^{s\vp_0}\cL_0g\|_{L^2(\Om)}^2+C\int_\Om|\nb(e^{s\vp_0}\cL_0g)|^2\,\rd x,\quad\forall\,s\ge s_2.\]
Since $\cL_0$ is a first order differential operator, it reveals that $\e^{s\vp_0}\cL_0g\in H_0^1(\Om)$, which allows us to apply the Poincar\'e inequality to conclude
\begin{align*}
s^2\int_\Om\left(|g|^2+|\nb g|^2\right)\e^{2s\vp_0}\,\rd x & \le Cs^2\|\nb(\e^{s\vp_0}\cL_0g)\|_{^2(\Om)}^2+C\int_\Om|\nb(e^{s\vp_0}\cL_0g)|^2\,\rd x\\
& \le Cs^2\int_\Om|\nb(\e^{s\vp_0}\cL_0g)|^2\,\rd x,\quad\forall\,s\ge s_2.
\end{align*}
This completes the proof of Lemma \ref{lem-CE-L0}.
\end{proof}

{\bf Step 5}\ \ We complete the proof of Theorem \ref{thm-stab} in this step. By \eqref{eq-asp-obs} and the definition of $\mu_0$, we see $\mu_0=\nb\mu_0=0$ on $\pa\Om(\de_0)\supset\pa\Om\setminus\Ga$. Together with \eqref{eq-est-f0}, we obtain $\mu_0f\in H_0^2(\Om)$, which allows us to take advantage of \eqref{eq-est-L0f} and Lemma \ref{lem-CE-L0} with $g=\mu_0f$ to derive
\begin{align*}
& \quad\,\,\int_\Om\left(|\mu_0f|^2+|\nb(\mu_0f)|^2\right)\e^{2s\vp_0}\,\rd x\le C\int_\Om\left|\nb(\e^{s\vp_0}\cL_0(\mu_0f))\right|^2\,\rd x\\
& \le Cs^2\int_{Q(\de_0)}\mu_0^2\left(|f|^2+|\nb f|^2\right)\e^{2s\vp}\,\rd x\rd +C\,\e^{Cs}D^2+Cs^2\,\e^{2\eta_1s}M^2,\quad\forall\,s\gg1.
\end{align*}
Substituting
\[\int_\Om|\nb(\mu_0f)|^2\e^{2s\vp_0}\,\rd x\le2\int_\Om\mu_0^2|\nb f|^2\e^{2s\vp_0}\,\rd x+C\,\e^{2\eta_1s}M^2\]
into the above inequality, we arrive at
\begin{align*}
I_2(s) & :=\int_\Om\mu_0^2\left(|f|^2+|\nb f|^2\right)\e^{2s\vp_0}\,\rd x\\
& \le Cs^2\int_{Q(\de_0)}\mu_0^2\left(|f|^2+|\nb f|^2\right)\e^{2s\vp}\,\rd x\rd t+C\,\e^{Cs}D^2+Cs^2\,\e^{2\eta_1s}M^2,\quad\forall\,s\gg1.
\end{align*}
Owing to the choice of the weight function, we can employ Lebesgue's dominated convergence theorem to absorb the first term in the right-hand side of the above estimate in the sense that
\begin{align*}
0 & \le\lim_{s\to\infty}\f{s^2}{I_2(s)}\int_{Q(\de_0)}\mu_0^2\left(|f|^2+|\nb f|^2\right)\e^{2s\vp}\,\rd x\rd t\\
& =\lim_{s\to\infty}\f1{I_2(s)}\int_\Om\mu_0^2\left(|f|^2+|\nb f|^2\right)\e^{2s\vp_0}\int_{-T}^Ts^2\exp\left\{2s\,\e^{\la d(x)}\left(\e^{-\la\be t^2}-1\right)\right\}\rd t\rd x\\
& \le\lim_{s\to\infty}\int_{-T}^Ts^2\exp\left\{Cs\left(\e^{-\la\be t^2}-1\right)\right\}\rd t=0.
\end{align*}
Consequently, we have
\[\int_\Om\mu_0^2\left(|f|^2+|\nb f|^2\right)\e^{2s\vp_0}\,\rd x\le C\,\e^{Cs}D^2+Cs^2\,\e^{2\eta_1s}M^2,\quad\forall\,s\gg1.\]
Recalling the facts that $\de>\de_1>\de_0$ and $\mu_0\equiv1$, $\vp_0\ge\e^{\la\de}=:\eta$ in $\Om(\de)$, we further estimate the left-hand side of the above inequality from below to deduce
\begin{align*}
\e^{2\eta s}\|f\|_{H^1(\Om(\de))} & \le\int_{\Om(\de)}\left(|f|^2+|\nb f|^2\right)\e^{2s\vp_0}\,\rd x\le\int_\Om\mu_0^2\left(|f|^2+|\nb f|^2\right)\e^{2s\vp_0}\,\rd x\\
& \le C\,\e^{Cs}D^2+Cs^2\,\e^{2\eta_1s}M^2,\quad\forall\,s\gg1.
\end{align*}
Since $\eta>\eta_1$ and $s^2\le\e^{(\eta-\eta_1)s}$ for all $s\gg1$, we conclude
\[\|f\|_{\Om(\de)}\le C\,\e^{Cs}D^2+C\,\e^{-(\eta-\eta_1)s}M^2,\quad\forall\,s\gg1.\]
Finally, discussing the two cases $D\ge M$ and $D<M$ as that in \cite{JLY17a}, we eventually obtain \eqref{eq-stab-a} and finish the proof.

\Section{Iteration Method for Numerical Reconstruction}\label{sec-recon}

As the theoretical stability is guaranteed by Theorem \ref{thm-stab}, in this section we study Problem \ref{prob-cip} from the numerical viewpoint and aim at the derivation of an iteration method. Basically, the derivation is parallel to its counterpart in \cite{JLY17a}, where the corresponding inverse source problem was investigated. However, for Problem \ref{prob-cip} we shall pay special attention to the nonlinearity and the ill-posedness in the recovery of the second order coefficient.

Instead of the general governing equation \eqref{eq-gov-u} whose coefficients are all assumed to be time-dependent, throughout this section we consider the initial-boundary value problem for a hyperbolic equation with the homogeneous Neumann boundary condition
\begin{equation}\label{eq-ibvp-u}
\begin{cases}
\pa_t^2u(x,t)-\rdiv(p(x)\nb u(x,t))=F(x,t), & x\in\Om,\ 0<t<T,\\
u(x,0)=u_0(x),\quad\pa_tu(x,0)=0, & x\in\Om,\\
\pa_\nu u(x,t)=0, & x\in\pa\Om,\ 0<t<T.
\end{cases}
\end{equation}
Here the boundary condition is simplified from $p\nb u\cdot\nu=0$ on $\pa\Om\times(0,T)$ because $p$ is assumed as strictly positive on $\ov\Om$ for the non-degeneracy. We limit the numerical treatment to the time-independent case \eqref{eq-ibvp-u} not only for its simplicity, but also due to the belief that the ill-posedness are essentially the same.

For later use, we recall the classical theory on the well-posedness of \eqref{eq-ibvp-u}.

\begin{lem}[see \cite{I06,LM72}]\label{lem-ibvp-u}
Let $k=0,1,2$ and $u$ satisfy \eqref{eq-ibvp-u}, where
\[p\in W^{k,\infty}(\Om),\quad F\in H^{k-1}(\Om\times(0,T)),\quad u_0\in H^k(\Om),\]
and the $k$th order compatibility condition is satisfied on $\pa\Om\times\{0\}$. Then there exists a unique solution $u\in C([0,T];H^k(\Om))\cap C^1([0,T];H^{k-1}(\Om))$ to \eqref{eq-ibvp-u}. Moreover, there exists a constant $C>0$ depending on $p,\Om,T$ such that
\[\|u\|_{C([0,T];H^k(\Om))}+\|u\|_{C^1([0,T];H^{k-1}(\Om))}\le C\left(\|F\|_{H^{k-1}(\Om\times(0,T))}+\|u_0\|_{H^k(\Om)}\right),\quad0\le t\le T.\]
\end{lem}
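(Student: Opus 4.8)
The statement in question is Lemma \ref{lem-ibvp-u}, a classical well-posedness result for the initial-boundary value problem \eqref{eq-ibvp-u} with homogeneous Neumann boundary condition. Since this is a standard result cited from \cite{I06,LM72}, the plan is to indicate the standard route rather than reprove the full hyperbolic theory.

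\medskip

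The plan is to proceed by the Galerkin method together with energy estimates, treating the three regularity levels $k=0,1,2$ in turn. First I would rewrite \eqref{eq-ibvp-u} in variational form: multiply by a test function $\phi\in H^1(\Om)$, integrate over $\Om$, and use the Neumann condition $\pa_\nu u=0$ to eliminate the boundary term, obtaining $\langle\pa_t^2u,\phi\rangle+\int_\Om p\,\nb u\cdot\nb\phi\,\rd x=\int_\Om F\,\phi\,\rd x$. Because $p\ge\ka_1>0$ on $\ov\Om$ and $p\in L^\infty(\Om)$, the bilinear form $B(u,\phi):=\int_\Om p\,\nb u\cdot\nb\phi\,\rd x$ is bounded and coercive on $H^1(\Om)$ modulo the $L^2$-norm (a G\r{a}rding-type inequality), which is exactly the structure needed for the hyperbolic Galerkin scheme. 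One constructs finite-dimensional approximations $u_m$ in the span of eigenfunctions of the Neumann Laplacian, derives the basic energy identity by using $\pa_tu_m$ as a test function, and applies Gronwall's inequality to get the uniform bound at level $k=1$: $\|u\|_{C([0,T];H^1)}+\|u\|_{C^1([0,T];L^2)}\le C(\|F\|_{L^2(\Om\times(0,T))}+\|u_0\|_{H^1})$.

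\medskip

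Next I would bootstrap to $k=2$ by differentiating the equation in $t$: setting $w:=\pa_tu$, one sees $w$ solves the same type of equation with source $\pa_tF$, initial data $w(0)=0$ and $\pa_tw(0)=\pa_t^2u(0)=\rdiv(p\nb u_0)+F(\cdot,0)\in L^2(\Om)$ (here the first-order compatibility condition $\pa_\nu u_0=0$ enters to make the trace terms vanish). Applying the $k=1$ estimate to $w$ gives control of $\pa_tu$ in $C([0,T];H^1)\cap C^1([0,T];L^2)$; then the elliptic equation $-\rdiv(p\nb u)=F-\pa_t^2u$ with the Neumann condition and elliptic regularity (using $p\in W^{1,\infty}$) promotes $u$ to $C([0,T];H^2)$. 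The case $k=0$ is obtained either by a transposition/duality argument or by density, with $F\in H^{-1}(\Om\times(0,T))$ interpreted in the dual pairing. Finally, uniqueness follows from the $k=1$ energy estimate applied to the difference of two solutions with zero data, and the stated bound is read off from the Gronwall constants, which depend only on $\|p\|_{L^\infty}$, $\ka_1$, $\Om$ and $T$.

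\medskip

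The main obstacle, as usual in this circle of ideas, is not conceptual but bookkeeping: verifying that the compatibility conditions are precisely what is needed to justify differentiating in time and integrating by parts without spurious boundary contributions, and tracking that the elliptic regularity step only costs one degree of smoothness on $p$ (so $W^{2,\infty}$ regularity of $p$ suffices for $k=2$). Since the lemma is explicitly attributed to \cite{I06,LM72}, the proof in the paper will almost certainly just cite these references or sketch the above Galerkin-plus-bootstrap argument in a few lines rather than carrying out the estimates in detail.
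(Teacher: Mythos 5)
The paper gives no proof of this lemma at all---it is stated as a classical well-posedness result with a pointer to Isakov and Lions--Magenes---and your Galerkin-plus-energy-estimate sketch, with the time-differentiation bootstrap to $k=2$ and transposition for $k=0$, is exactly the standard argument those references carry out. Your outline is correct, including the role of the compatibility conditions in justifying $\partial_t^2u(\cdot,0)=\mathrm{div}(p\nabla u_0)+F(\cdot,0)\in L^2(\Omega)$ and the elliptic-regularity step that promotes $u$ to $C([0,T];H^2(\Omega))$.
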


Henceforth, we basically assume
\begin{equation}\label{eq-cond-F}
p\in W^{2,\infty}(\Om),\quad F\in H^1(\Om\times(0,T)),\quad u_0\in H^2(\Om),
\end{equation}
and the second order compatibility condition is satisfied on $\pa\Om\times\{0\}$. Then according to Lemma \ref{lem-ibvp-u} with $k=2$, problem \eqref{eq-ibvp-u} admits a unique solution $u\in C([0,T];H^2(\Om))\cap C^1([0,T];H^1(\Om))$. As before, we still denote the unique solution to \eqref{eq-ibvp-u} as $u(p)$.

To deal with Problem \ref{prob-cip} from the numerical aspect, we restrict ourselves to the following situation. Regarding the observation region, we consider the partial interior observation of $u(p)$ in $\om\times(0,T)$ with a subdomain $\om\subset\Om$ satisfying $\pa\om\supset\pa\Om$. In other words, we require $\om$ to cover the whole boundary $\pa\Om$, which is special in Type (II) of Problem \ref{prob-cip}. In fact, although there seems no difference between boundary and interior measurements in the theoretical stability, the latter is definitely more informative and suitable for the numerical implementation. On the other hand, it follows from Remark \ref{rem-cip} that $\pa\om\supset\pa\Om$ is a sufficient condition for the global stability of Problem \ref{prob-cip}, which is desirable for determining $p$ in the whole domain $\Om$.

In accordance with the above setting, we restrict the unknown function $p$ in
\begin{equation}\label{eq-def-adm1}
\cU_1:=\{p\in W^{2,\infty}(\Om);\,\|p\|_{H^1(\Om)}\le M_1,\ p\ge\ka_1\mbox{ in }\ov\Om\,,\ p=h_0\mbox{ on }\pa\Om\}
\end{equation}
with given constants $M_1>0,\ka_1>0$ and a given function $h_0\in W^{2,\infty}(\pa\Om)$. Compared with the admissible set $\cU$ defined in \eqref{eq-def-adm} for the theoretical stability, here we remove the restriction of $\pa_\nu p$ on $\pa\Om$. Nevertheless, we still require that $p$ is known on the whole boundary due to the key assumption \eqref{eq-asp-obs} for the stability. We refer to \cite{IY03} for the same type of admissible sets as $\cU_1$.

In practice, we are given the noisy observation data $u^\de\in L^2(\om\times(0,T))$ such that
\[\|u^\de-u(p_\true)\|_{L^2(\om\times(0,T))}\le\de,\]
where $p_\true\in\cU_1$ and $\de>0$ stand for the true solution and the noise level respectively. Now we are well prepared to recast Problem \ref{prob-cip} into a minimization problem with the Tikhonov regularization
\begin{equation}\label{eq-def-J}
\min_{p\in\cU_1}J(p),\quad J(p):=\|u(p)-u^\de\|_{L^2(\om\times(0,T))}^2+\al\|\nb p\|_{L^2(\Om)}^2,
\end{equation}
where $\al>0$ denotes the regularization parameter. Unlike the formulation in \cite{LJY15,JLY17a}, here we penalize the $L^2$-norm of $\nb p$ because one can expect certain smoothness of $p$ as the second order coefficient. Meanwhile, there is no need to penalize the $H^1$-norm of $p$ due to the boundary condition $p=h_0$ on $\pa\Om$.

As usual, we shall compute the Fr\'echet derivative of $J(p)$ in order to characterize its possible minimizer $p_*$. For arbitrarily fixed $p\in\cU_1$, we may choose any $\wt p\in W^{2,\infty}(\Om)$ such that
\begin{equation}\label{eq-cond-wtp}
\|\wt p\,\|_{W^{2,\infty}(\Om)}=1,\quad \wt p=0\mbox{ on }\pa\Om\quad\mbox{and}\quad p+\ve\,\wt p\in\cU_1
\end{equation}
holds for all sufficiently small $\ve>0$. By \eqref{eq-def-J}, we directly calculate
\begin{align}
\f{J(p+\ve\,\wt p\,)-J(p)}\ve & =\int_0^T\!\!\!\!\int_\om\f{u(p+\ve\,\wt p\,)-u(p)}\ve\left(u(p+\ve\,\wt p\,)+u(p)-2u^\de\right)\rd x\rd t\nonumber\\
& \quad\,+\al\int_\Om\nb\wt p\cdot(2\nb p+\ve\nb\wt p\,)\,\rd x,\label{eq-diff-J}
\end{align}
In order to pass $\ve\downarrow0$ in \eqref{eq-diff-J}, we need the following technical lemma.

\begin{lem}\label{lem-cov-w}
Let $u(p)$ and $u(p+\ve\,\wt p\,)$ be the solutions to \eqref{eq-ibvp-u} with coefficients $p$ and $p+\ve\,\wt p$ respectively, where $F,u_0$ satisfy \eqref{eq-cond-F}, $p\in\cU_1$ in \eqref{eq-def-adm1} and $\wt p$ satisfies \eqref{eq-cond-wtp} for all sufficiently small $\ve>0$. Then
\[\lim_{\ve\downarrow0}\|u(p+\ve\,\wt p\,)-u(p)\|_{C([0,T];H^1(\Om))}=\lim_{\ve\downarrow0}\left\|\f{u(p+\ve\,\wt p\,)-u(p)}\ve-w_0\right\|_{C([0,T];L^2(\Om))}=0,\]
where $w_0$ satisfies
\begin{equation}\label{eq-ibvp-w}
\begin{cases}
\pa_t^2w_0-\rdiv(p\nb w_0)=\rdiv(\wt p\,\nb u(p)) & \mbox{in }\Om\times(0,T),\\
w_0=\pa_tw_0=0 & \mbox{in }\Om\times\{0\},\\
\pa_\nu w_0=0 & \mbox{on }\pa\Om\times(0,T).
\end{cases}
\end{equation}
\end{lem}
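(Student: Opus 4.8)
The plan is to set $w_\ve:=u(p+\ve\,\wt p\,)-u(p)$ and first establish that $w_\ve\to0$ in $C([0,T];H^1(\Om))$, then show that $w_\ve/\ve$ converges to the solution $w_0$ of the linearized problem \eqref{eq-ibvp-w}. Both steps rely on the energy estimate in Lemma \ref{lem-ibvp-u}, applied to the hyperbolic equation with principal coefficient $p$ (which is fixed and satisfies $p\in\cU_1$, hence $p\ge\ka_1$ and $\|p\|_{W^{2,\infty}(\Om)}$ bounded uniformly in $\ve$, so the constant $C$ in Lemma \ref{lem-ibvp-u} is independent of $\ve$). Subtracting the governing equations for $u(p+\ve\,\wt p\,)$ and $u(p)$, one finds that $w_\ve$ solves
\[
\begin{cases}
\pa_t^2w_\ve-\rdiv(p\nb w_\ve)=\ve\,\rdiv(\wt p\,\nb u(p+\ve\,\wt p\,)) & \mbox{in }\Om\times(0,T),\\
w_\ve=\pa_tw_\ve=0 & \mbox{in }\Om\times\{0\},\\
\pa_\nu w_\ve=0 & \mbox{on }\pa\Om\times(0,T),
\end{cases}
\]
where I have used that $\wt p=0$ on $\pa\Om$ to absorb the boundary contribution, i.e.\ the Neumann condition for $w_\ve$ stays homogeneous because $(p+\ve\wt p)\nb u(p+\ve\wt p)\cdot\nu=p\nb u(p+\ve\wt p)\cdot\nu$ on $\pa\Om$. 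The source term $\ve\,\rdiv(\wt p\,\nb u(p+\ve\wt p))$ lies in $L^2(\Om\times(0,T))$ with norm $O(\ve)$, provided $u(p+\ve\wt p)$ is bounded in $C([0,T];H^2(\Om))$ uniformly in $\ve$; this uniform bound is exactly Lemma \ref{lem-ibvp-u} with $k=2$ applied to $u(p+\ve\wt p)$, whose coefficient $p+\ve\wt p$ still satisfies the hypotheses of that lemma with constants uniform in small $\ve$. Hence, applying Lemma \ref{lem-ibvp-u} with $k=1$ to $w_\ve$ gives $\|w_\ve\|_{C([0,T];H^1(\Om))}+\|w_\ve\|_{C^1([0,T];L^2(\Om))}\le C\ve$, which yields the first limit.

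For the second limit, set $r_\ve:=w_\ve/\ve-w_0$. Subtracting \eqref{eq-ibvp-w} from the (divided by $\ve$) equation for $w_\ve$, one obtains
\[
\begin{cases}
\pa_t^2r_\ve-\rdiv(p\nb r_\ve)=\rdiv\!\big(\wt p\,\nb(u(p+\ve\wt p)-u(p))\big)=\rdiv(\wt p\,\nb w_\ve) & \mbox{in }\Om\times(0,T),\\
r_\ve=\pa_tr_\ve=0 & \mbox{in }\Om\times\{0\},\\
\pa_\nu r_\ve=0 & \mbox{on }\pa\Om\times(0,T).
\end{cases}
\]
Again the boundary condition is homogeneous because $\wt p=0$ on $\pa\Om$. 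The right-hand side $\rdiv(\wt p\,\nb w_\ve)$ is controlled in $L^2(0,T;H^{-1}(\Om))$ by $C\|w_\ve\|_{C([0,T];H^1(\Om))}\le C\ve\to0$ by the first step. Applying the energy estimate of Lemma \ref{lem-ibvp-u} with $k=1$ to $r_\ve$ (the case of an $H^{-1}$-source is standard for the Neumann wave equation, or alternatively one writes the weak formulation and uses the usual multiplier $\pa_tr_\ve$), we get $\|r_\ve\|_{C([0,T];L^2(\Om))}\le C\|\rdiv(\wt p\,\nb w_\ve)\|_{L^2(0,T;H^{-1}(\Om))}\le C\ve\to0$, which is the second claimed limit. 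It remains only to note that \eqref{eq-ibvp-w} is itself well-posed with $w_0\in C([0,T];H^1(\Om))\cap C^1([0,T];L^2(\Om))$, since its source $\rdiv(\wt p\,\nb u(p))\in L^2(0,T;H^{-1}(\Om))$ (indeed in $L^2(\Om\times(0,T))$, as $u(p)\in C([0,T];H^2(\Om))$ and $\wt p\in W^{2,\infty}(\Om)$), so $w_0$ is well defined.

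The main obstacle is bookkeeping the uniformity in $\ve$: one must verify that every constant produced by Lemma \ref{lem-ibvp-u} depends on the coefficient $p+\ve\wt p$ only through the ellipticity lower bound $\ka_1$ and an upper bound for $\|p+\ve\wt p\|_{W^{2,\infty}(\Om)}$, both of which are uniform for $\ve$ small because $p+\ve\wt p\in\cU_1$ and $\|\wt p\|_{W^{2,\infty}(\Om)}=1$. A secondary technical point is the compatibility condition: one should check that the source terms above, together with the vanishing initial data, meet the compatibility hypotheses of Lemma \ref{lem-ibvp-u} at the appropriate order — here this is immediate since the initial data for $w_\ve$ and $r_\ve$ are zero and the Neumann boundary condition is homogeneous. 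With these points in place, the two convergences follow directly from the linear energy estimates, and no delicate analysis beyond Lemma \ref{lem-ibvp-u} is needed.
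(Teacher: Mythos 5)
Your proposal is correct and follows essentially the same route as the paper: the difference $u(p+\ve\wt p)-u(p)$ satisfies a wave equation with source $\ve\,\rdiv(\wt p\,\nb u(p+\ve\wt p))$, the uniform $C([0,T];H^2(\Om))$ bound from Lemma \ref{lem-ibvp-u} with $k=2$ gives the first limit via the $k=1$ energy estimate, and the remainder $r_\ve$ is handled by the $H^{-1}$-source estimate applied to $\rdiv(\wt p\,\nb v_\ve)$. The only slip is labeling that last application as ``$k=1$''; in the paper's indexing the $H^{-1}$-source/$C([0,T];L^2)$ case is $k=0$, but this does not affect the argument.
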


\begin{proof}
Introduce $v_\ve:=u(p+\ve\,\wt p\,)-u(p)$. By taking difference of \eqref{eq-ibvp-u} with $u(p+\ve\,\wt p\,)$ and $u(p)$, it reveals that $v_\ve$ satisfies
\begin{equation}\label{eq-ibvp-v}
\begin{cases}
\pa_t^2v_\ve-\rdiv(p\nb v_\ve)=\ve\,\rdiv(\wt p\,\nb u(p+\ve\,\wt p\,)) & \mbox{in }\Om\times(0,T),\\
v_\ve=\pa_tv_\ve=0 & \mbox{in }\Om\times\{0\},\\
\pa_\nu v_\ve=0 & \mbox{on }\pa\Om\times(0,T).
\end{cases}
\end{equation}
Since $p+\ve\,\wt p$ lies in the $\ve$-neighborhood of $p$ by \eqref{eq-cond-wtp}, it follows from Lemma \ref{lem-ibvp-u} with $k=2$ that there exists a constant $C_2>0$ such that
\[\|u(p+\ve\,\wt p\,)\|_{C([0,T];H^2(\Om))}\le C_2\left(\|F\|_{H^1(\Om\times(0,T))}+\|u_0\|_{H^2(\Om)}\right)=:M_2\]
holds uniformly for all sufficiently small $\ve\ge0$. This indicates
\[\|\rdiv(\wt p\,\nb u(p+\ve\,\wt p\,))\|_{L^2(\Om\times(0,T))}\le C\|\wt p\,\|_{W^{1,\infty}(\Om)}\lim_{\ve\downarrow0}\|u(p+\ve\,\wt p\,)\|_{C([0,T];H^2(\Om))}\le CM_2\]
uniformly for all sufficiently small $\ve\ge0$, where we have $\|\wt p\,\|_{W^{1,\infty}(\Om)}\le1$ by \eqref{eq-cond-wtp}. Applying Lemma \ref{lem-ibvp-u} with $k=1$ to \eqref{eq-ibvp-v}, we obtain
\begin{align}
\lim_{\ve\downarrow0}\|u(p+\ve\,\wt p\,)-u(p)\|_{C([0,T];H^1(\Om))} & =\lim_{\ve\downarrow0}\|v_\ve\|_{C([0,T];H^1(\Om))}\nonumber\\
& \le C\lim_{\ve\downarrow0}\ve\|\rdiv(\wt p\,\nb u(p+\ve\,\wt p\,))\|_{L^2(\Om\times(0,T))}=0.\label{eq-cov-v}
\end{align}

In the same manner, we further set $w_\ve:=\ve^{-1}v_\ve$ and manipulate \eqref{eq-ibvp-w}--\eqref{eq-ibvp-v} to find
\[\begin{cases}
\pa_t^2(w_\ve-w_0)-\rdiv(p\nb(w_\ve-w_0))=\rdiv(\wt p\,\nb v_\ve) & \mbox{in }\Om\times(0,T),\\
w_\ve-w_0=\pa_t(w_\ve-w_0)=0 & \mbox{in }\Om\times\{0\},\\
\pa_\nu(w_\ve-w_0)=0 & \mbox{on }\pa\Om\times(0,T).
\end{cases}\]
Then we employ \eqref{eq-cov-v} and Lemma \ref{lem-ibvp-u} with $k=0$ to conclude
\begin{align*}
& \quad\,\,\lim_{\ve\downarrow0}\left\|\f{u(p+\ve\,\wt p\,)-u(p)}\ve-w_0\right\|_{C([0,T];L^2(\Om))}=\lim_{\ve\downarrow0}\|w_\ve-w_0\|_{C([0,T];L^2(\Om))}\\
& \le C\lim_{\ve\downarrow0}\|\rdiv(\wt p\,\nb v_\ve)\|_{H^{-1}(\Om\times(0,T))}\le C\|\wt p\,\|_{L^\infty(\Om)}\lim_{\ve\downarrow0}\|v_\ve\|_{C([0,T];H^1(\Om))}=0,
\end{align*}
which finishes the proof.
\end{proof}

Now that the convergence is guaranteed by the above lemma, we can pass $\ve\downarrow0$ in \eqref{eq-diff-J} to deduce
\begin{align}
\f{J'(p)\wt p}2 & =\lim_{\ve\downarrow0}\f{J(p+\ve\,\wt p\,)-J(p)}{2\ve}=\int_0^T\!\!\!\!\int_\om w_0\left(u(p)-u^\de\right)\rd x\rd t+\al\int_\Om\nb p\cdot\nb\wt p\,\rd x\rd t\nonumber\\
& =\int_0^T\!\!\!\!\int_\Om w_0\,\chi_\om\left(u(p)-u^\de\right)\rd x\rd t-\al\int_\Om \wt p\,\tri p\,\rd x,\label{eq-div-J}
\end{align}
where $\chi_\om$ denotes the characteristic function of $\om$, and we utilized $\wt p=0$ on $\pa\Om$ to obtain \eqref{eq-div-J} by integration by parts.

In order to derive the explicit form of $J'(p)$, we should further transform the first term on the right-hand side of \eqref{eq-div-J}. To this end, we follow the same line as that in \cite{LJY15,JLY17a} to introduce the backward problem
\begin{equation}\label{eq-ibvp-z}
\begin{cases}
\pa_t^2z-\rdiv(p\nb z)=\chi_\om\left(u(p)-u^\de\right) & \mbox{in }\Om\times(0,T),\\
z=\pa_tz=0 & \mbox{in }\Om\times\{T\},\\
\pa_\nu z=0 & \mbox{on }\pa\Om\times(0,T).
\end{cases}
\end{equation}
To clarify the dependency, we also denote the solution to \eqref{eq-ibvp-z} as $z(p)$. Since $\chi_\om\left(u(p)-u^\de\right)\in L^2(\Om\times(0,T))$, Lemma \ref{lem-ibvp-u} gives $z(p)\in H^1(\Om\times(0,T))$. On the other hand, it can be inferred from the proof of Lemma \ref{lem-cov-w} that the solution of \eqref{eq-ibvp-w} satisfies $w_0\in H^1(\Om\times(0,T))$ and $w_0|_{t=0}=0$. Hence, in view of the weak solution of hyperbolic equations, we can regard $w_0$ and $z(p)$ as mutual test functions of each other, so that we can further treat
\begin{align*}
\int_0^T\!\!\!\!\int_\Om w_0\,\chi_\om\left(u(p)-u^\de\right)\rd x\rd t & =\int_0^T\!\!\!\!\int_\Om\left(p\nb z(p)\cdot\nb w_0-(\pa_tz(p))\pa_tw_0\right)\rd x\rd t\\
& =\int_0^T\!\!\!\!\int_\Om z(p)\,\rdiv(\wt p\,\nb u(p))\,\rd x\rd t=-\int_0^T\!\!\!\!\int_\Om \wt p\,\nb u(p)\cdot\nb z(p)\,\rd x\rd t.
\end{align*}
Substituting the above identity into \eqref{eq-div-J}, we arrive at
\[\f{J'(p)\wt p}2=-\int_\Om\left(\int_0^T\nb u(p)\cdot\nb z(p)\,\rd t+\al\,\tri p\right)\wt p\,\rd x.\]
Since $\wt p$ was taken arbitrarily which satisfies \eqref{eq-cond-wtp}, this suggests a characterization of the minimizer to the problem \eqref{eq-def-J}.

\begin{prop}\label{prop-min}
Let $\cU_1$ be the admissible set defined in \eqref{eq-def-adm1}, and $J(p)$ be the functional defined in \eqref{eq-def-J}. Then $p_*\in\cU_1$ is a minimizer of $J(p)$ within $\cU_1$ only if it satisfies the variational equation
\begin{equation}\label{eq-var-p*}
\int_0^T\nb u(p_*)\cdot\nb z(p_*)\,\rd t+\al\,\tri p_*=0,
\end{equation}
where $u(p_*)$ and $z(p_*)$ solve the forward system \eqref{eq-ibvp-u} and the backward one \eqref{eq-ibvp-z} with the coefficient $p_*$, respectively.
\end{prop}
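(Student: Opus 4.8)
The plan is to push through the first-order optimality analysis already begun in the text preceding the statement and then invoke the standard necessary condition for a minimizer. First I would recall that \eqref{eq-diff-J} records the difference quotient $\ve^{-1}\bigl(J(p+\ve\,\wt p\,)-J(p)\bigr)$ explicitly, and that Lemma \ref{lem-cov-w} supplies exactly the convergences $u(p+\ve\,\wt p\,)\to u(p)$ in $C([0,T];H^1(\Om))$ and $\ve^{-1}(u(p+\ve\,\wt p\,)-u(p))\to w_0$ in $C([0,T];L^2(\Om))$ needed to let $\ve\downarrow0$ in that quotient; this produces the directional derivative $J'(p)\wt p$ in the form \eqref{eq-div-J}, expressed through the linearized state $w_0$ solving \eqref{eq-ibvp-w}.

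Next I would introduce the backward (adjoint) problem \eqref{eq-ibvp-z} for $z(p)$. Since its source $\chi_\om(u(p)-u^\de)$ lies in $L^2(\Om\times(0,T))$, Lemma \ref{lem-ibvp-u} gives $z(p)\in H^1(\Om\times(0,T))$, while the proof of Lemma \ref{lem-cov-w} shows $w_0\in H^1(\Om\times(0,T))$ with $w_0|_{t=0}=0$. The key algebraic step is to use $w_0$ and $z(p)$ as mutual test functions in the weak formulations of \eqref{eq-ibvp-w} and \eqref{eq-ibvp-z}: the Neumann boundary terms vanish, and the time-endpoint contributions cancel because $w_0$ vanishes at $t=0$ and $z(p)$ at $t=T$. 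This yields
\[
\int_0^T\!\!\!\!\int_\Om w_0\,\chi_\om(u(p)-u^\de)\,\rd x\rd t
=\int_0^T\!\!\!\!\int_\Om z(p)\,\rdiv(\wt p\,\nb u(p))\,\rd x\rd t
=-\int_0^T\!\!\!\!\int_\Om \wt p\,\nb u(p)\cdot\nb z(p)\,\rd x\rd t,
\]
the last equality being an integration by parts using $\wt p=0$ on $\pa\Om$. Inserting this into \eqref{eq-div-J}, and integrating the regularization term by parts (again using $\wt p=0$ on $\pa\Om$), gives $\f12\,J'(p)\wt p=-\int_\Om\bigl(\int_0^T\nb u(p)\cdot\nb z(p)\,\rd t+\al\,\tri p\bigr)\wt p\,\rd x$ for every admissible direction $\wt p$ obeying \eqref{eq-cond-wtp}.

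To finish, I would argue that a minimizer $p_*\in\cU_1$ must annihilate this linear functional: for any admissible $\wt p$ the one-sided derivative satisfies $J'(p_*)\wt p\ge0$, and since $-\wt p$ is also an admissible direction (for $p_*$ away from the constraints $p\ge\ka_1$ and $\|p\|_{H^1}\le M_1$, any sufficiently small $C_0^\infty(\Om)$ perturbation keeps $p_*\pm\ve\,\wt p$ in $\cU_1$), one gets $J'(p_*)\wt p=0$ for all such $\wt p$; the integrand $\int_0^T\nb u(p_*)\cdot\nb z(p_*)\,\rd t+\al\,\tri p_*$ lies in $L^1(\Om)$ (using $u(p_*)\in C([0,T];H^2(\Om))$, $z(p_*)\in H^1(\Om\times(0,T))$ and $p_*\in W^{2,\infty}(\Om)$), so by the fundamental lemma of the calculus of variations it must vanish a.e.\ in $\Om$, which is \eqref{eq-var-p*}. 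The step I expect to require the most care is the mutual-test-function identity: one must make the pairing between $w_0$ and $z(p)$ rigorous with only $H^1$-in-spacetime regularity on hand and verify that every boundary and time-endpoint term genuinely drops out — this is precisely where the homogeneous Neumann condition, the initial conditions for $w_0$ and the terminal conditions for $z(p)$ are all consumed; everything else is routine bookkeeping already essentially displayed before the statement.
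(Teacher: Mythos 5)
Your proposal is correct and follows essentially the same route as the paper: passing to the limit in \eqref{eq-diff-J} via Lemma \ref{lem-cov-w}, introducing the adjoint problem \eqref{eq-ibvp-z}, pairing $w_0$ and $z(p)$ as mutual test functions, and integrating by parts using $\wt p=0$ on $\pa\Om$ to isolate the integrand of \eqref{eq-var-p*}. If anything, your closing step (two-sided directional derivatives plus the fundamental lemma of the calculus of variations, with the caveat about interior points of the constraint set) is slightly more careful than the paper's, which simply reads off the characterization from the formula for $J'(p)\wt p$.
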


On the basis of \eqref{eq-var-p*}, we design the following iteration scheme
\begin{equation}\label{eq-itr}
\left\{\!\begin{alignedat}{2}
& \tri p_{m+1}=\f K{K+\al}\tri p_m-\f1{K+\al}\int_0^T\nb u(p_m)\cdot\nb z(p_m)\,\rd t & \quad & \mbox{in }\Om,\\
& p_{m+1}=h_0 & \quad & \mbox{on }\pa\Om,
\end{alignedat}\right.\quad m=0,1,\ldots,
\end{equation}
where $K>0$ is a tuning parameter. In other words, given the result $p_m$ of the previous step, we have to solve the forward system \eqref{eq-ibvp-u}, the backward system \eqref{eq-ibvp-z} and the boundary value problem \eqref{eq-itr} for a Poisson equation subsequently to obtain $p_{m+1}$. In comparison with the inverse source problems treated in \cite{LJY15,JLY17a,JLY17b,JLLY17}, we see that both solutions to forward and backward problems appear in \eqref{eq-itr} due to the nonlinearity of Problem \ref{prob-cip}. More importantly, here we should update $p_m$ indirectly by solving an extra Poisson equation since we penalize $\nb p$ instead of $p$ itself. Such an additional procedure, however, does not affect the efficiency because the computational cost for solving \eqref{eq-itr} is rather minor compared with that for solving two time evolution equations. On the other hand, in view of the variational principle, it is readily seen that the solution $p_{m+1}$ of \eqref{eq-itr} coincides with the minimizer of the minimization problem
\begin{equation}\label{eq-min}
\min\int_\Om\left\{\f12|\nb p|^2+p\left(\f K{K+\al}\tri p_m-\f1{K+\al}\int_0^T\nb u(p_m)\cdot\nb z(p_m)\,\rd t\right)\right\}\rd x
\end{equation}
for all $p\in H^1(\Om)$ satisfying $p=h_0$ on $\pa\Om$.

Concerning the convergence issue, we notice the relation between the iteration \eqref{eq-itr} and the minimization problem of a surrogate functional
\begin{equation}\label{eq-def-Js}
J^s(p,q):=J(p)+K\|\nb(p-q)\|_{L^2(\Om)}^2-\|u(p)-u(q)\|_{L^2(\om\times(0,T))}^2,\quad p,q\in\cU_1.
\end{equation}
Indeed, let us fix $q$ and consider the minimization of $J^s(p,q)$ with respect to $p$ which is sufficiently close to $q$, e.g., $\|p-q\|_{W^{1,\infty}(\Om)}\ll1$. Separating the terms involving $p$ from others in \eqref{eq-def-Js}, we treat $J^s(p,q)$ as
\begin{align}
J^s(p,q) & =(K+\al)\|\nb p\|_{L^2(\Om)}^2-2K\int_\Om\nb p\cdot\nb q\,\rd x+2\int_0^T\!\!\!\!\int_\om u(p)\left(u(q)-u^\de\right)\rd x\rd t\nonumber\\
& \quad\,+\|u^\de\|_{L^2(\om\times(0,T))}^2-\|u(q)\|_{L^2(\om\times(0,T))}^2+K\|\nb q\|_{L^2(\Om)}^2\nonumber\\
& =(K+\al)\|\nb p\|_{L^2(\Om)}^2+2K\int_\Om p\,\tri q\,\rd x+2\int_0^T\!\!\!\!\int_\Om v\,\chi_\om\left(u(q)-u^\de\right)\rd x\rd t\nonumber\\
& \quad\,+\|u(q)-u^\de\|_{L^2(\om\times(0,T))}+K\left(\|\nb q\|_{L^2(\Om)}^2-2\int_{\pa\Om}h_0\,\pa_\nu q\,\rd\si\right)\label{eq-Js-1},
\end{align}
where $v:=u(p)-u(q)$ satisfies
\[\begin{cases}
\pa_t^2v-\rdiv(p\nb v)=\rdiv((p-q)\nb u(q)) & \mbox{in }\Om\times(0,T),\\
v=\pa_tv=0 & \mbox{in }\Om\times\{0\},\\
\pa_\nu v=0 & \mbox{on }\pa\Om\times(0,T).
\end{cases}\]
Utilizing the backward problem \eqref{eq-ibvp-z}, we take $z(q)$ and $v$ as mutual test functions to deduce
\begin{align}
& \quad\,\,\int_0^T\!\!\!\!\int_\Om v\,\chi_\om\left(u(q)-u^\de\right)\rd x\rd t=\int_0^T\!\!\!\!\int_\Om(q\nb v\cdot\nb z(q)-(\pa_tv)\,\pa_tz(q))\,\rd x\rd t\nonumber\\
& =\int_0^T\!\!\!\!\int_\Om(p\nb v\cdot\nb z(q)-(\pa_tv)\,\pa_tz(q))\,\rd x\rd t+\int_0^T\!\!\!\!\int_\Om(q-p)\nb v\cdot\nb z(q)\,\rd x\rd t\nonumber\\
& =\int_0^T\!\!\!\!\int_\Om\rdiv((p-q)\nb u(q))\,z(q)\,\rd x\rd t+O(\|p-q\|_{W^{1,\infty}(\Om)}^2)\nonumber\\
& =-\int_0^T\!\!\!\!\int_\Om(p-q)\nb u(q)\cdot\nb z(q)\,\rd x\rd t+O(\|p-q\|_{W^{1,\infty}(\Om)}^2),\label{eq-Js-2}
\end{align}
where we used $p-q=0$ on $\pa\Om$ and applied Lemma \ref{lem-ibvp-u} with $k=1$ to $v$ to estimate
\begin{align*}
\|v\|_{C([0,T];H^1(\Om))} & \le C\|\rdiv((p-q)\nb u(q))\|_{L^2(\Om\times(0,T))}\\
& \le C\|u(q)\|_{C([0,T];H^2(\Om))}\|p-q\|_{W^{1,\infty}(\Om)}\le C\|p-q\|_{W^{1,\infty}(\Om)}
\end{align*}
within the admissible set $\cU_1$. Substituting \eqref{eq-Js-2} into \eqref{eq-Js-1}, we collect the constant component of $J^s(p,q)$ as $C_3$ and conclude
\begin{align*}
J^s(p,q) & =2(K+\al)\int_\Om\left\{\f12|\nb p|^2+p\left(\f K{K+\al}\tri q-\f1{K+\al}\int_0^T\nb u(q)\cdot\nb z(q)\,\rd t\right)\right\}\rd x\\
& \quad\,+O(\|p-q\|_{W^{1,\infty}(\Om)}^2)+C_3.
\end{align*}
Comparing the above expression with \eqref{eq-min}, we figure out that the iterative update \eqref{eq-itr} is almost equivalent to solving a series of minimization problem
\begin{equation}\label{eq-min-sur}
\min_{p\in\cU_1}J^s(p,p_m),\quad m=0,1,\ldots,
\end{equation}
provided that $\|p-p_m\|_{W^{1,\infty}(\Om)}$ is sufficiently small. On the other hand, it is well known that the convergence of \eqref{eq-min-sur} is guaranteed by the positivity of the surrogate functional $J^s(p,q)$ for all $p,q\in\cU_1$ (see \cite{RT06}). By definition, this is achieved by taking sufficiently large $K>0$ such that
\begin{equation}\label{eq-choice-K}
\|u(p)-u(q)\|_{L^2(\om\times(0,T))}^2\le K\|\nb(p-q)\|_{L^2(\Om)}^2,\quad\forall\,p,q\in\cU_1.
\end{equation}
Consequently, it reveals that the convergence of \eqref{eq-min-sur} almost indicates the convergence of our proposed iteration \eqref{eq-itr}. Unfortunately, due to the nonlinearity of the problem, we cannot remove the second order term $O(\|p-q\|_{W^{1,\infty}(\Om)}^2)$ in \eqref{eq-Js-2}, which prevents us from proving the convergence rigorously.

We close this section by summarizing the main algorithm for the numerical reconstruction.

\begin{algo}\label{algo-itr}
Fix the boundary value $h_0$ of $p_\true$. Choose a tolerance $\ep>0$, a regularization parameter $\al>0$ and a suitably large tuning constant $K>0$. Give an initial guess $p_0$ and set $m=0$.
\begin{enumerate}
\item Compute $p_{m+1}$ according to the iterative update \eqref{eq-itr}.
\item If $\|p_{m+1}-p_m\|_{L^2(\Om)}/\|p_m\|_{L^2(\Om)}\le\ep$, then stop the iteration. Otherwise, update $m\leftarrow m+1$ and return to Step 1.
\end{enumerate}
\end{algo}

\Section{Numerical Examples}\label{sec-numer}

In this section, we apply the iteration method proposed in the previous section to the numerical treatment for Problem \ref{prob-cip}, and evaluate its numerical performance. More precisely, we shall implement Algorithm \ref{algo-itr} to reconstruct the principal coefficient $p$ in the hyperbolic equation \eqref{eq-ibvp-u}. As the first attempt, we restrict ourselves to one spatial dimension, and simply set $\Om=(0,1)$ and $T=1$. We divide $\ov\Om\times[0,T]=[0,1]^2$ into $100\times100$ equidistant meshes, and employ some unconditionally finite difference methods to solve the $3$ equations involved in Algorithm \ref{algo-itr}, namely, \eqref{eq-ibvp-u}, \eqref{eq-ibvp-z} and \eqref{eq-itr}.

We specify various coefficients and parameters to be used in the numerical tests as follows. For the source term $F$ and the initial value $u_0$ of \eqref{eq-ibvp-u}, we fix
\[F(x,t)=x+t+1,\quad u_0(x)\equiv1.\]
For the boundary condition of $p$ required in \eqref{eq-def-adm1}, we simply set
\[p|_{\pa\Om}=h_0\equiv1\quad\mbox{on }\pa\Om=\{0,1\}.\]
Given the true solution $p_\true$ and thus the noiseless data $u(p_\true)$, we generate the noisy data $u^\de$ by adding uniform random noises in such a way that
\[u^\de(x,t)=u(p_\true)(x,t)+\de\,\rand(-1,1),\quad x\in\om,\ 0<t<T,\]
where $\rand(-1,1)$ denotes the random number uniformly distributed in $[-1,1]$. For the noise level $\de>0$, we choose it as a certain portion of the amplitude of the noiseless data, that is,
\[\de:=\de_0\|u(p_\true)\|_{C(\ov\Om\times[0,T])},\quad0<\de_0<1.\]
For the tuning parameter $K$, it should be chosen sufficiently large to guarantee the convergence (see \eqref{eq-choice-K}). Roughly speaking, it depends on the operator norm of the forward operator which maps $p$ to $u(p)|_{\om\times(0,T)}$, which is impossible to compute in practice. Hence, we have to postulate that $K$ is proportional to the size $|\om|$ of the observation subdomain. Analogously, we also make empirical choices of the the regularization parameter $\al$ and the stopping criteria $\ep$ in Algorithm \ref{algo-itr} in such a way that
\begin{equation}\label{eq-para}
K\propto|\om|,\quad \al\propto\de,\quad\ep\propto\de_0.
\end{equation}
In all examples, we fix the initial guess as $p_0\equiv1$. As usual, we evaluate the numerical performance of Algorithm \ref{algo-itr} by the number $N$ of iterations, the relative $L^2$ error
\[\err:=\f{\|p_N-p_\true\|_{L^2(\Om)}}{\|p_\true\|_{L^2(\Om)}},\]
the elapsed time and sometimes the illustrative figures, and we recognize $p_N$ as the result of the numerical reconstruction.

\begin{ex}\label{ex-1}
First, we test Algorithm \ref{algo-itr} with several choices of true solutions $p_\true$ to demonstrate its accuracy and efficiency. More precisely, we fix the subdomain $\om=\Om\setminus[0.1,0.9]$ and the relative noise level $\de_0=1\%$. Correspondingly, we choose $K=2\times10^{-5}$ and $\al=10^{-7}$. The following true solutions with different shapes and smoothness are taken into consideration:
\begin{enumerate}
\item[(a)] A smooth and symmetric true solution $p_\true(x)=\f12\sin\pi x+1$.
\item[(b)] An asymmetric true solution $p_\true(x)=x(x-1)(x-\f32)+1$.
\item[(c)] A non-smooth true solution $p_\true(x)=\f12\min(x,1-x)+\f14\sin\pi x+1$.
\end{enumerate}
Various aspects of the numerical performance are listed in Table \ref{tab-ex1}. The comparisons of true solutions with their reconstructed ones are illustrated in Figure \ref{fig-ex1}.
\begin{table}[htbp]\centering
\caption{The numerical performance of Algorithm \ref{algo-itr} for various choices of true solutions in Example \ref{ex-1}.}\label{tab-ex1}
\begin{tabular}{ll|lll}
\hline\hline
Case & $p_\true(x)$ & $N$ & $\err$ & Elapsed time\\
\hline
(a) & $\f12\sin\pi x+1$ & $21$ & $0.86\%$ & $0.51\,\s$\\
(b) & $x(x-1)(x-\f32)+1$ & $35$ & $0.61\%$ & $0.97\,\s$\\
(c) & $\f12\min(x,1-x)+\f14\sin\pi x+1$ & $9$ & $1.72\%$ & $0.25\,\s$\\
\hline\hline
\end{tabular}
\end{table}
\begin{figure}[htbp]\centering
\subfigure[$p_\true(x)=\f12\sin\pi x+1$]{
\includegraphics[trim=5mm 2mm 12mm 5mm,clip=true,width=.48\textwidth]{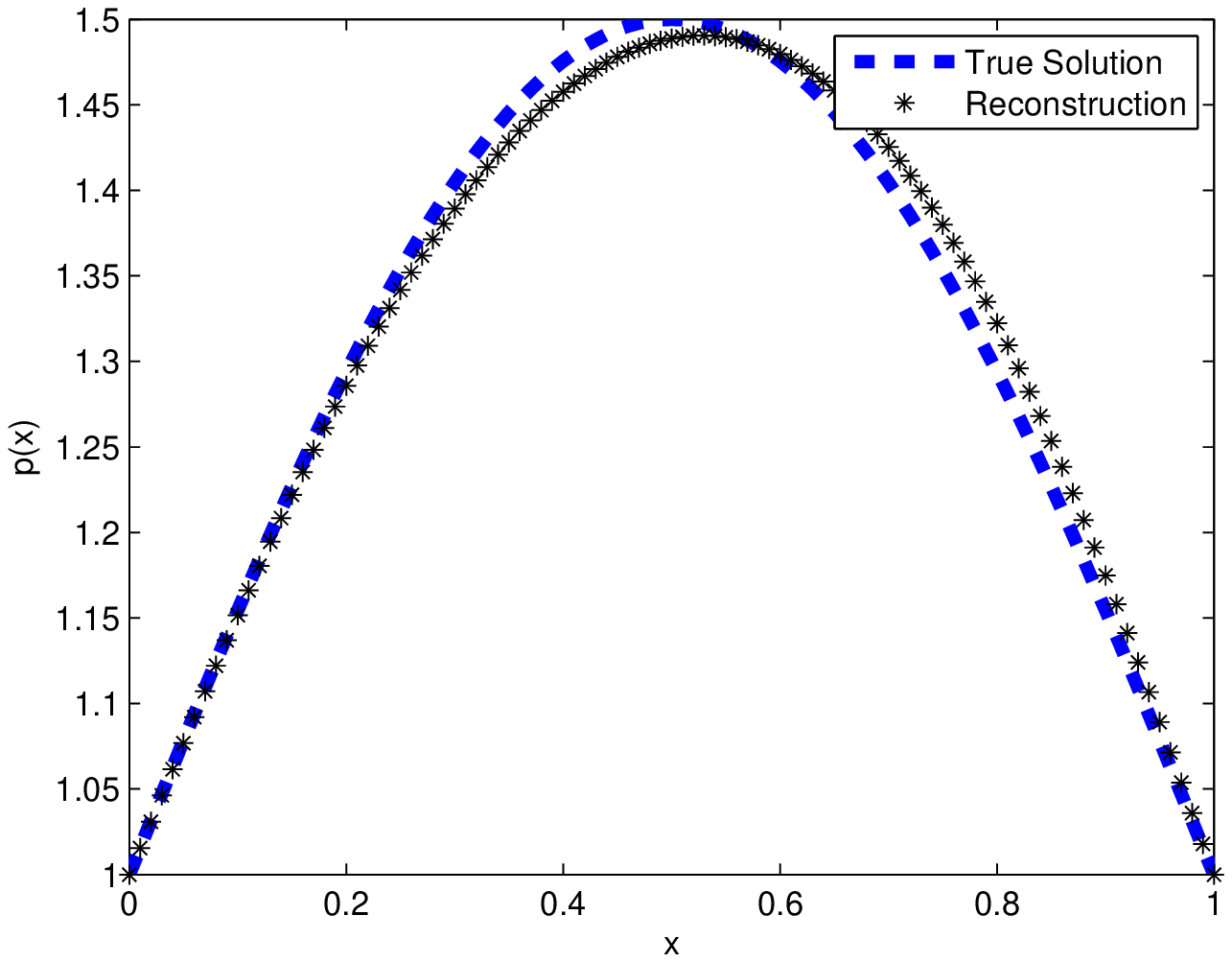}}
\subfigure[$p_\true(x)=x(x-1)(x-\f32)+1$]{
\includegraphics[trim=5mm 2mm 12mm 5mm,clip=true,width=.48\textwidth]{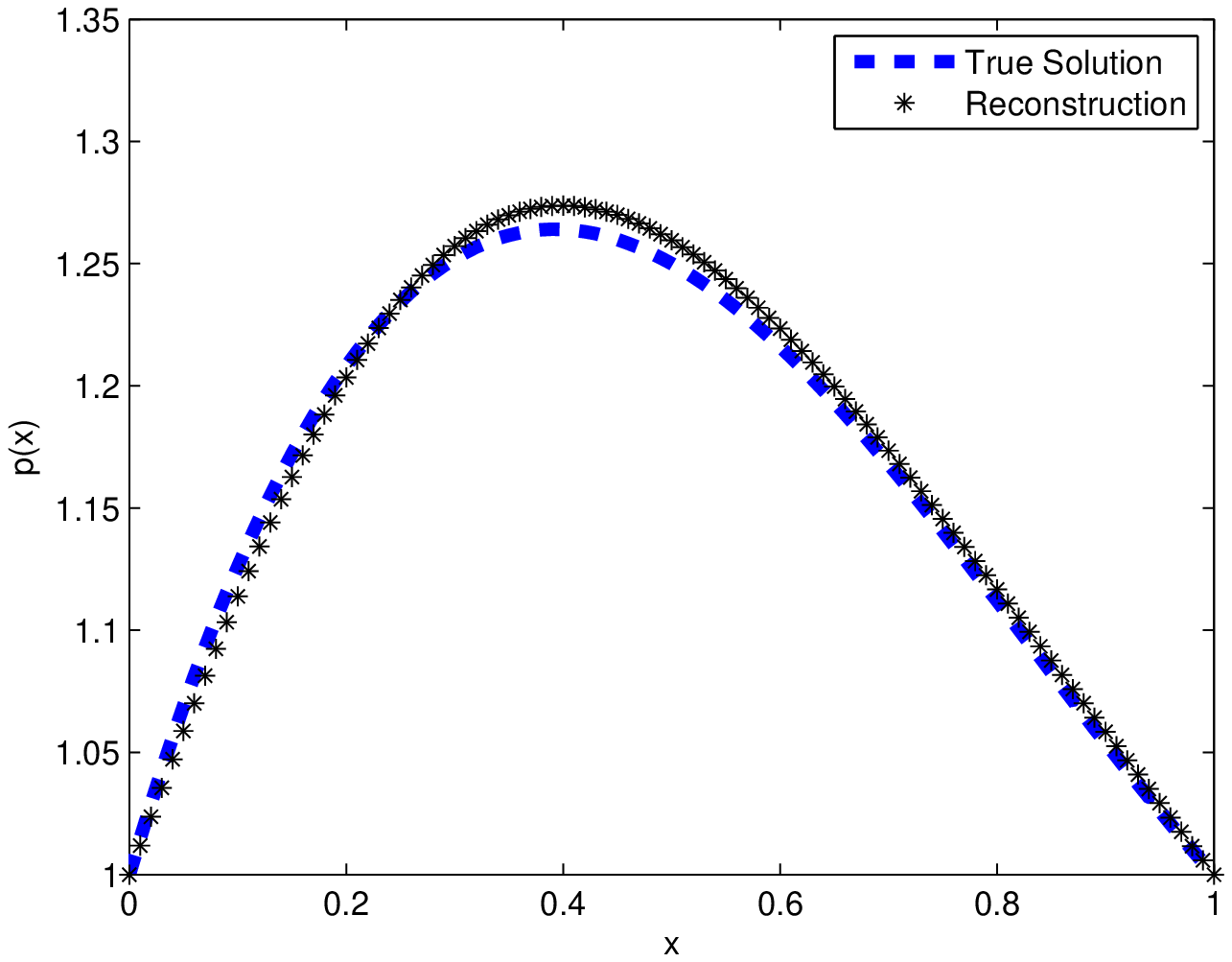}}
\subfigure[$p_\true(x)=\f12\min(x,1-x)+\f14\sin\pi x+1$]{
\includegraphics[trim=5mm 2mm 12mm 0mm,clip=true,width=.48\textwidth]{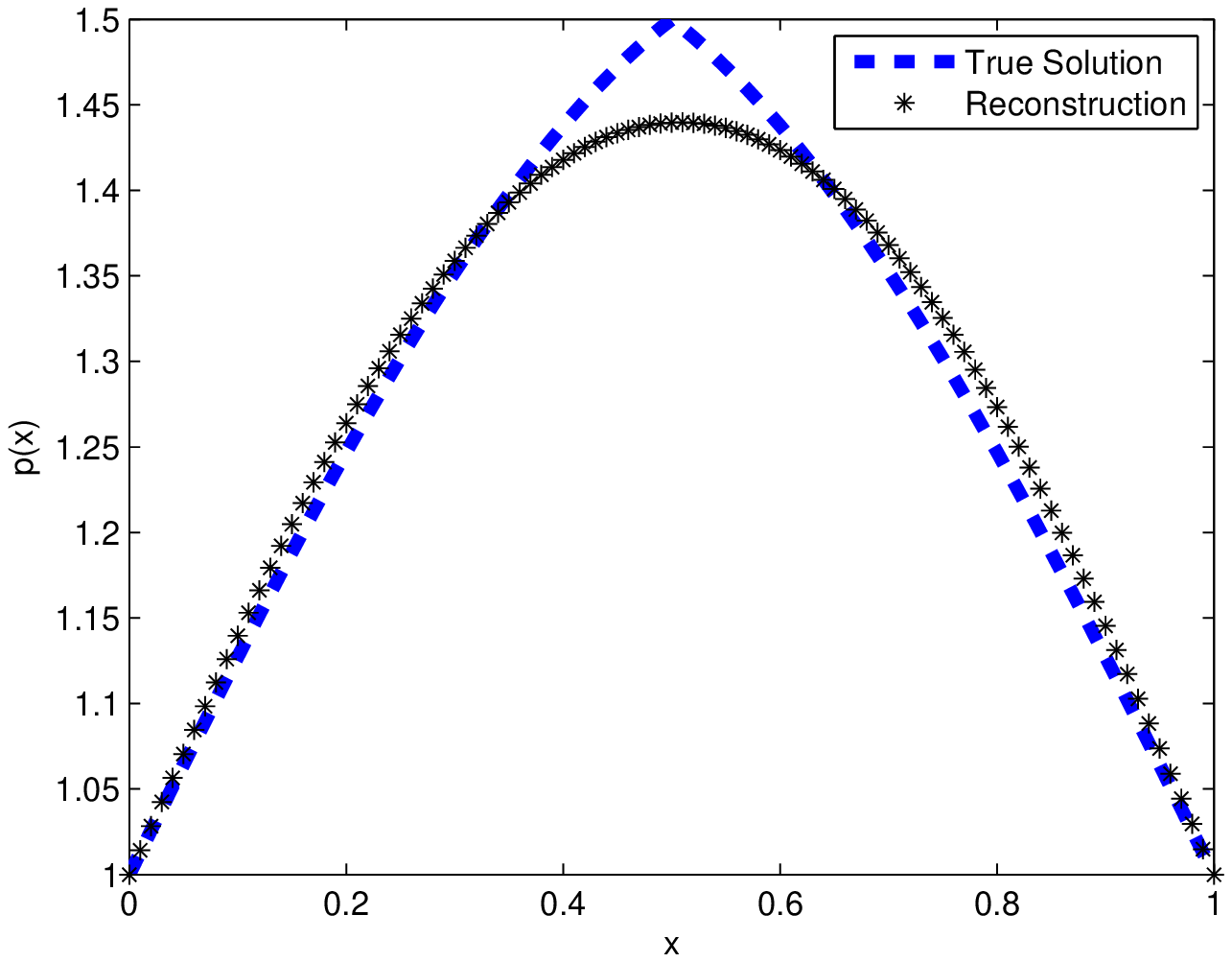}}
\caption{True solutions and the corresponding reconstructions in Example \ref{ex-1}.}\label{fig-ex1}
\end{figure}
\end{ex}

\begin{ex}\label{ex-2}
In this example, we fix the true solution as
\[p_\true(x)=\f12\sin\pi x+1\]
and evaluate the performance of Algorithm \ref{algo-itr} with different combinations of noise levels and observation subdomains. In detail, we first fix the relative noise level as $\de_0=1\%$ as that in Example \ref{ex-1}, and change the observation subdomain $\om$ as
\[\om=\Om\setminus[0.2,0.8],\quad\om=\Om\setminus[0.1,0.9],\quad\om=\Om\setminus[0.05,0.95]\]
with decreasing sizes. Next, we fix $\om=\Om\setminus[0.1,0.9]$ and increase the relative noises $\de_0$ as $0\%$, $1\%$, $2\%$, $4\%$ and $8\%$. In accordance with the above combinations of $\de_0$ and $\om$, we also change the parameters $M$ and $\al$ according to \eqref{eq-para}. The choices of parameters in the tests and the resulting numerical performance are listed in Table \ref{tab-ex2}.
\begin{table}[htbp]\centering
\caption{Parameters and corresponding numerical performance in Example \ref{ex-2} under various combinations of the relative noise levels $\de_0$ and the observation subdomains $\om$.}\label{tab-ex2}
\begin{tabular}{ll|ll|lll}
\hline\hline
$\om$ & $\de_0$ & $K$ & $\al$ & $N$ & $\err$ & Elapsed time\\
\hline
$\Om\setminus[0.2,0.8]$ & $1\%$ & $4\times10^{-5}$ & $10^{-7}$ & $17$ & $0.66\%$ & $0.43\,\s$\\
$\Om\setminus[0.1,0.9]$ & $1\%$ & $2\times10^{-5}$ & $10^{-7}$ & $21$ & $0.86\%$ & $0.51\,\s$\\
$\Om\setminus[0.05,0.95]$ & $1\%$ & $10^{-5}$ & $10^{-7}$ & $39$ & $0.94\%$ & $0.80\,\s$\\
\hline
$\Om\setminus[0.1,0.9]$ & $0\%$ & $2\times10^{-5}$ & $10^{-9}$ & $9$ & $0.54\%$ & $0.31\,\s$\\
$\Om\setminus[0.1,0.9]$ & $2\%$ & $2\times10^{-5}$ & $2\times10^{-7}$ & $33$ & $1.39\%$ & $0.96\,\s$\\
$\Om\setminus[0.1,0.9]$ & $4\%$ & $2\times10^{-5}$ & $4\times10^{-7}$ & $128$ & $3.06\%$ & $2.98\,\s$\\
$\Om\setminus[0.1,0.9]$ & $8\%$ & $2\times10^{-5}$ & $8\times10^{-7}$ & $121$ & $18.70\%$ & $2.23\,\s$\\
\hline\hline
\end{tabular}
\end{table}
\end{ex}

The above examples demonstrate the accuracy and robustness of Algorithm \ref{algo-itr} as its previous applications e.g.\! in \cite{LJY15,JLY17a}. Especially, in the reconstruction of the principal coefficient, it is obvious that the problem suffers from stronger ill-posedness and nonlinearity compared with the corresponding inverse source problem. Even though, the proposed method still provides satisfactory results with rather small observation subdomain and moderately large noise in observation data.

In Example \ref{ex-1}, our method proves its feasibility for various choices of true solutions, even including a non-smooth one. This suggests the possibility of relaxing the assumption $p_\true\in W^{2,\infty}(\Om)$ in the derivation of Algorithm \ref{algo-itr}. Unfortunately, it is shown in Figure \ref{fig-ex1}(c) that our method fails to capture the local non-smoothness far away from $\om$ because of its $L^2$-based formulation.

On the other hand, Example \ref{ex-2} illustrates the influence of the size of $\om$ and the noise level upon the numerical performance. The results agree well with our common sense, namely, a smaller observation subdomain $\om$ results in a larger relative error with more iteration steps until convergence. Still, we need the coverage $\pa\om\supset\pa\Om$ for the numerical stability. Meanwhile, both error and iteration steps also increase with larger noise level as expected. However, we see in Table \ref{tab-ex2} that an $8\%$ relative noise causes considerably large error in the reconstruction, possibly due to the strong nonlinearity of the problem.

\Section{Concluding Remarks}\label{sec-remark}

The propose of this article is to investigate Problem \ref{prob-cip}, namely, the determination of the spatial component $p(x)$ in the second order coefficient of a hyperbolic equation, from both theoretical and numerical aspects. Theoretically, we are mainly motivated by the existing literature represented by \cite{IY03} and formulate the problem within the general hyperbolic operator $\cH_p$ with a time-dependent principal part. On the same direction of \cite{JLY17a}, we take advantage of the key Carleman estimates for $\cH_p$ to establish a local H\"older stability result for Problem \ref{prob-cip}. The proof starts from the routine linearization, but unlike \cite{IY03} we should turn to another Carleman estimate to dominate the $H^1$-norm of $p-q$ by that of $\rdiv((p-q)a\nb u(q))$. The reason traces back to our choice of including $p$ in the divergence in \eqref{eq-def-Hp} for a concrete physical meaning. Instead, if we base the discussion on a nearly non-divergence form
\[\pa_t^2u-p\,\rdiv(a\nb u)-b\cdot\nb u+c\,u=F,\]
then the source term after linearization becomes $(p-q)\rdiv(a\nb u(q))$, and the $L^2$ estimate of $p-q$ reduces to an immediate corollary of \cite[Theorem 2.3]{JLY17a}. The same comment applies to the determination of any spatial components of lower order coefficients in $\cH_p$, by which we can expect the identical stability result. In these cases, it suffices to replace \eqref{eq-asp-1} by some analogous non-vanishing assumptions, and we omit the details here.

In the numerical aspect, we adopt the orthodox Tikhonov regularization to interpret Problem \ref{prob-cip} as a minimization problem. For the highest order coefficient $p$, we penalize the $L^2$-norm of $\nb p$ with its information given on the whole boundary. Calculating the Fr\'echet derivative, we derive the variational equation for a minimizer of the Tikhonov functional, which involves a backward problem and the Laplacian of $p$. This suggests a novel iterative update \eqref{eq-itr}, where one should solve a Poisson equation at each step. Moreover, by the variational principle we find a link between \eqref{eq-itr} and the minimization of a corresponding surrogate functional. Unfortunately, the convergence of the latter does not imply that of the former, because their equivalence is not rigorous due to the nonlinearity of Problem \ref{prob-cip}.

We conclude this paper with some possible future topics related to Problem \ref{prob-cip}. As was mentioned in Remark \ref{rem-cip}, the local stability in Theorem \ref{thm-stab} relies heavily on the choice of the weight function $\vp$ in Carleman estimates. We shall consider the possibility of a clever choice of $\vp$ which optimizes the stability and reduces the observation cost. Meanwhile, another interesting issue is the simultaneous determination of several coefficients, e.g., finding $p,q$ in
\[\pa_t^2u(x,t)-\rdiv(\mathrm{diag}(p(x),q(x))\nb u(x,t))=F(x,t).\]
For such kind of problems, possibly one should take several measurements. As a similar but far more difficult case, one can study the same problem for linear anisotropic Lam\'e systems with time-dependent principal parts. Numerically, the idea of solving an auxiliary equation seems fresh in iteration methods to the best of our knowledge. We are interested in applying it to other inverse problems and analyze its properties, especially convergence.

\bigskip

{\bf Acknowledgement}\ \ The authors appreciate the valuable discussions with Jin Cheng and Shuai Lu (Fudan University). This work is supported by A3 Foresight Program ``Modeling and Computation of Applied Inverse Problems'', Japan Society for the Promotion of Science (JSPS) and National Natural Science Foundation of China. The second and the third authors are partially supported by Grant-in-Aid for Scientific Research (S) 15H05740, JSPS. The second author is supported by JSPS Postdoctoral Fellowship for Overseas Researchers and Grant-in-Aid for JSPS Fellows 16F16319.

\end{document}